\numberwithin{equation}{subsection}
\newcounter{keepeqno}
\newenvironment{num}
 {\setcounter{keepeqno}{\value{equation}}%
  \begin{list}{(\theequation)}{\usecounter{equation}}%
  \setcounter{equation}{\value{keepeqno}}}
 {\end{list}}
\newcommand{\BC}{{\mathbb {C}}}
\newcommand{\BG}{{\mathbb {G}}}
\newcommand{\BH}{{\mathbb {H}}}
\newcommand{\BR}{{\mathbb {R}}}
\newcommand{\BZ}{{\mathbb {Z}}}
\newcommand{\CC}{{\mathcal {C}}}
\newcommand{\CI}{{\mathcal {I}}}
\newcommand{\CL}{{\mathcal {L}}}
\newcommand{\CO}{{\mathcal {O}}}
\newcommand{\CS}{{\mathcal {S}}}
\newcommand{\Fg}{{\mathfrak {g}}}
\newcommand{\Fi}{{\mathfrak {i}}}
\newcommand{\Fl}{{\mathfrak {l}}}
\newcommand{\Fo}{{\mathfrak {o}}}
\newcommand{\Fs}{{\mathfrak {s}}}
\newcommand{\Ft}{{\mathfrak {t}}}
\newcommand{\RB}{{\mathrm {B}}}
\newcommand{\RH}{{\mathrm {H}}}
\newcommand{\RI}{{\mathrm {I}}}
\newcommand{\RM}{{\mathrm {M}}}
\newcommand{\RN}{{\mathrm {N}}}
\newcommand{\RO}{{\mathrm {O}}}
\newcommand{\Ad}{{\mathrm{Ad}}}
\newcommand{\disc}{{\mathrm{disc }}}
\newcommand{\GL}{{\mathrm{GL}}}
\newcommand{\Hom}{{\mathrm{Hom}}}
\renewcommand{\Im}{{\mathrm{Im}}}
\newcommand{\Ind}{{\mathrm{Ind}}}
\newcommand{\Id}{{\mathrm{Id}}}
\newcommand{\Lie}{{\mathrm{Lie}}}
\newcommand{\reg}{{\mathrm{reg}}}
\newcommand{\Res}{{\mathrm{Res}}}
\newcommand{\SL}{{\mathrm{SL}}}
\newcommand{\SO}{{\mathrm{SO}}}
\newcommand{\sgn}{{\mathrm{sgn}}}
\newcommand{\Sp}{{\mathrm{Sp}}}
\newcommand{\tr}{{\mathrm{tr}}}
\newcommand{\ud}{\,\mathrm{d}}
\newcommand{\udl}{\underline}
\newcommand{\wh}{\widehat}
\newcommand{\bs}{\backslash}
\def\alp{{\alpha}}
\def\del{{\delta}}
\def\Del{{\Delta}}
\def\eps{{\epsilon}}
\def\veps{{\varepsilon}}
\def\sig{{\sigma}}
\def\nil{\mathrm{Nil}}
\def\Nr{\mathrm{Nr}}
\def\std{\mathrm{std}}
\def\lam{{\lambda}}
\def\Sig{{\Sigma}}
\def\Gam{{\Gamma}}
\def\LG{{}^{L}G}
\def\vphi{\varphi}
\def\p{\prime}
\def\rss{\mathrm{rss}}
\def\geom{\mathrm{geom}}
\def\ss{\mathrm{ss}}
\def\ad{\mathrm{ad}}
\def\Vg{\mathrm{Vogan}}
\def\rel{\mathrm{rel}}
\def\qs{\mathrm{qs}}
\def\PI{\mathrm{PI}}
\def\NI{\mathrm{NI}}
\def\qd{\mathrm{qd}}
\def\cpt{\mathrm{cpt}}
\def\SO{\mathrm{SO}}
\def\Norm{\mathrm{Norm}}
\newtheorem{thm}{Theorem}[subsection]
\newtheorem{defin}[thm]{Definition}
\newtheorem{rmk}[thm]{Remark}
\newtheorem{pro}[thm]{Proposition}
\newtheorem{lem}[thm]{Lemma}
\newtheorem{cor}[thm]{Corollary}
\newtheorem{conjec}[thm]{Conjecture}
\newcommand{\Rmnum}[1]{\expandafter\@slowromancap\romannumeral #1@}
\DeclarePairedDelimiter{\floor}{\lfloor}{\rfloor}
\subjclass[2020]{Primary 22E50 22E45; Secondary 20G20}
\keywords{Gross-Prasad conjecture, orbital integral, endoscopy}
\begin{document}

\title[The local Gross-Prasad conjecture over $\BR$: epsilon dichotomy]{The local Gross-Prasad conjecture over $\BR$: epsilon dichotomy}

\author{Cheng Chen}
\address{IMJ-PRG, CNRS\\8 place Aurélie Nemours, 75013, Paris, France
}
\email{cheng.chen@imj-prg.fr}

\author{Zhilin Luo}
\address{Department of Mathematics\\
Purdue University\\
West Lafayette, IN 47907, USA}
\email{luo642@purdue.edu}

\begin{abstract}
Following the work of Jean-Loup Waldspurger, we prove the epsilon dichotomy part of the local Gross-Prasad conjecture over $\BR$ for tempered local $L$-parameters.
\end{abstract}

\maketitle

\tableofcontents

\section{Introduction}\label{sec:intro}

This paper continues our study of the \emph{local Gross--Prasad conjecture}
\cite{MR1186476,MR1295124}. Let $F$ be a local field of characteristic zero,
and let $(W,V)$ be a pair of non-degenerate quadratic spaces over $F$ such that $
W\subset V$ and $W^\perp$ is split of odd dimension.
Set $
G=\SO(W)\times \SO(V).$
Let $N$ be the unipotent radical of the parabolic subgroup of $\SO(V)$ attached
to a full isotropic flag in $W^\perp$, and put
$
H=\SO(W)\ltimes N.
$
Fix a generic character $\xi$ of $N(F)$ extended to $H(F)$ trivially. For an irreducible admissible representation $\pi$ of
$G(F)$, define
\begin{equation}\label{eq:intro:1}
m(\pi)=\dim \Hom_{H(F)}(\pi,\xi),
\end{equation}
where, in the archimedean case, $\Hom_{H(F)}(\pi,\xi)$ denotes the space of
continuous intertwiners. The multiplicity-one theorem
\cite{agrsmut1,ggporiginal,MR3202559,szmut1,MR2720228} asserts that
\[
m(\pi)\leq 1.
\]

The local Gross--Prasad conjecture speculates a refinement for the behavior of the multiplicity \eqref{eq:intro:1}. The pure inner forms relevant to the pair
$(W,V)$ are indexed by
\[
H^1(F,\SO(W))\simeq H^1(F,H),
\]
which classifies quadratic spaces over $F$ with the same dimension and
discriminant as $W$. For $\alp\in H^1(F,H)$, let $W_\alp$ be the corresponding
quadratic space and set
\[
V_\alp=W_\alp\oplus^\perp W^\perp,\qquad
G_\alp=\SO(W_\alp)\times \SO(V_\alp).
\]
Then $G_\alp$ is a pure inner form of $G$ with ${}^LG_\alp\simeq \LG$. For a local $L$-parameter
\[
\vphi:\CL_F\to \LG,
\]
where $\CL_F$ denotes the local Langlands group of $F$, let
$\Pi^{G_\alp}(\vphi)$ be the local $L$-packet of $G_\alp$
\cite{langlandsproblems}. Following D. Vogan \cite{MR1216197}, define the Vogan $L$-packet attached to
$\vphi$ by
\[
\Pi^{\Vg}(\vphi)=
\bigsqcup_{\alp\in H^1(F,G)}\Pi^{G_\alp}(\vphi).
\]
After the choice of a Whittaker datum for the family of pure inner forms
$\{G_\alp\}_{\alp\in H^1(F,G)}$, Vogan's parametrization gives a
non-degenerate pairing
\[
\Pi^{\Vg}(\vphi)\times \CS_\vphi \to \{\pm 1\},
\]
known over archimedean fields by \cite[Thm.~6.3]{MR1216197}. Here
\[
\CS_\vphi=\pi_0(S_\vphi),\qquad
S_\vphi=Z_{\wh{G}}(\Im(\vphi)).
\]
Thus each $\pi\in \Pi^{\Vg}(\vphi)$ determines a character
\[
\chi_\pi:\CS_\vphi\to \{\pm 1\}.
\]
For the Gross--Prasad conjecture, the relevant packet is obtained by restricting
to the pure inner forms $(G_\alp,H_\alp,\xi_\alp)$ arising from $\alp\in H^1(F,H)\to H^1(F,G)$:
\[
\Pi^{\Vg}_\rel(\vphi)
=
\bigsqcup_{\alp\in H^1(F,H)}\Pi^{G_\alp}(\vphi).
\]

The Gross--Prasad conjecture may then be stated as follows.

\begin{conjec}[\cite{MR1186476,MR1295124}]\label{conjec:ggp:intro}
The following assertions hold.
\begin{enumerate}
    \item
    For every generic local $L$-parameter $\vphi$,
    \[
    \sum_{\pi\in \Pi^{\Vg}_{\rel}(\vphi)}m(\pi)=1.
    \]

    \item
    Fix the Whittaker datum for $\{G_\alp\}_{\alp\in H^1(F,G)}$ as in
    \cite[(6.3)]{MR1295124}. Let $\pi_\vphi$ be the unique representation in
    $\Pi^\Vg_\rel(\vphi)$ satisfying $m(\pi_\vphi)=1$. Then
    \[
    \chi_{\pi_\vphi}=\chi_\vphi,
    \]
    where $\chi_\vphi$ is defined in \eqref{eq:chivphicharacter:1}.
\end{enumerate}
\end{conjec}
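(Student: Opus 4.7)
The plan is to extend the $p$-adic strategy of Waldspurger to the archimedean setting, taking as an essential input the multiplicity formula for $m(\pi)$ established in the companion part of this series (which settles statement (1) of the conjecture). Concretely, that formula expresses
$$
m(\pi) = \int_{\Gam_\el(H)} \Theta_\pi(x) \, \Delta(x) \, dx
$$
as an integral of the Harish-Chandra character $\Theta_\pi$ of $\pi$ against a geometric distribution supported on (quasi-)elliptic semisimple classes in $H$, with an explicit weight $\Delta(x)$ depending on the Whittaker datum for $N \subset H$. The epsilon-dichotomy statement will be deduced by inserting this formula into the Vogan-packet sum and matching it, via endoscopy, with a purely character-theoretic quantity determined by $\chi_\vphi$.

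The key steps I would carry out, in order, are the following. \emph{Step 1.} Fix the Whittaker datum for the Vogan packet as in \cite[(6.3)]{MR1295124}, and rewrite the sum $\sum_{\pi \in \Pi^\Vg_\rel(\vphi)} m(\pi)\chi_\pi$ as an endoscopic sum indexed by $s \in \CS_\vphi$, using the Vogan pairing; this turns the multiplicity formula into an identity whose right-hand side is a sum over endoscopic data $(G_s, \vphi_s)$ of transferred stable characters $S\Theta_{\vphi_s}$ evaluated on $\Gam_\el(H)$. \emph{Step 2.} Using Shelstad's archimedean transfer and the known endoscopic character identities for tempered packets of real classical groups, translate each stable character integral over $\Gam_\el(H)$ into a corresponding integral on an endoscopic Gross-Prasad pair $(H_s, G_s)$ of strictly smaller dimension. \emph{Step 3.} Apply induction on $\dim W$: for the endoscopic pairs, statement (1) of Conjecture~\ref{conjec:ggp:intro} (already available) selects a distinguished packet element, and the epsilon dichotomy is the inductive hypothesis. \emph{Step 4.} Combine the resulting recursive identity with Gross-Prasad's local root-number calculus (in particular the multiplicativity of $\epsilon$-factors under $\vphi = \vphi_1 \oplus \vphi_2$ and the behavior of $\epsilon$ along the centralizer decomposition $s \in \CS_\vphi$) to identify the character thus produced with $\chi_\vphi$, concluding $\chi_{\pi_\vphi} = \chi_\vphi$.

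The hard part will be the archimedean analytic input in Step 2: over $\BR$ the geometric distribution involves unipotent/nilpotent integrals along $N$ that are far more delicate than in the $p$-adic case, both for convergence (requiring Schwartz-space rather than compactly supported test functions) and for the precise matching of transfer factors on regular semisimple-but-degenerate orbits approaching the identity. In particular, tracking the normalization of Shelstad's spectral transfer factors so that the induced sign on the Vogan packet coincides with Gross-Prasad's $\chi_\vphi$ -- rather than differing by a genuine character of $\CS_\vphi$ that would spoil the dichotomy -- is the subtle archimedean phenomenon that the proof must pin down. Everything else is, in principle, a faithful adaptation of the $p$-adic endoscopic bookkeeping of Waldspurger.
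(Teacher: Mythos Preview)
Your sketch follows the right broad architecture --- geometric multiplicity formula, endoscopic character identities via Shelstad, induction on dimension, and the root-number calculus of Gross--Prasad --- and this is indeed the skeleton of the paper's argument. However, you have missed the structural insight that makes the archimedean proof \emph{simpler} than a direct port of Waldspurger's $p$-adic argument, and you have misidentified where the genuine difficulties lie.

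The paper's key observation is a classification theorem (Theorem~\ref{thm:classifyparameters}): over $\BR$, every tempered $L$-parameter of a special orthogonal group is either in a base case ($\dim V\leq 3$, handled by \cite[Prop.~7.4]{MR1295124}), of parabolic-induction type, or of genuine endoscopic-induction type with $S_{\vphi_V}\not\subset Z\cdot S^\circ_{\vphi_V}$. This trichotomy means one \emph{never} needs twisted endoscopy or the twisted-$\GL$ multiplicity formula of \cite{MR3155344}; ordinary endoscopy and induction suffice. Your plan, by contrast, implicitly follows the full $p$-adic template and would presumably attempt to establish the archimedean analogue of the twisted machinery, which is unnecessary work.

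The actual archimedean obstacles are also different from what you flag. First, over $\BR$ there are not just two relevant pure inner forms but many, so the paper organizes them by Kottwitz sign $e(G_\alp)$ and proves a combinatorial parameterization (Proposition~\ref{pro:unionpureinnerconjclassof}, Lemma~\ref{lem:fiberoverkappappunionoverpure}) describing the union of the geometric supports across pure inner forms with fixed sign. Second, the $p$-adic proof relies on \cite[Lem.~13.4~(ii)]{waldspurger10}, whose proof in turn uses \cite[Conj.~1.2]{waldtransfert}, which is not available over $\BR$; the paper replaces this with a direct computation via Rossmann's Fourier-transform formula for orbital integrals (Lemma~\ref{lem:thecalculationofj}). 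The transfer-factor normalization issue you highlight is real but comparatively routine once Shelstad's results are in hand; the real content is in these two replacements.
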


In the $p$-adic case, Conjecture \ref{conjec:ggp:intro} for tempered local
$L$-parameters was proved by Waldspurger
\cite{waldspurger10,MR3155344,MR3155345,waldspurgertemperedggp,MR3202559}.
The extension to generic local $L$-parameters was obtained by C.~M\oe glin and
Waldspurger from the tempered case \cite{MR3155346}.

For archimedean $F$, Part (1) of Conjecture \ref{conjec:ggp:intro} for tempered
local $L$-parameters was proved over $\BR$ by the second author in his thesis
\cite{thesis_zhilin}, following the strategy of Waldspurger and
Beuzart-Plessis \cite{beuzart2015local}. In \cite{chen2021local}, the first
author reduced Conjecture \ref{conjec:ggp:intro} for generic local
$L$-parameters over $\BR$ to the tempered case, following M\oe glin--Waldspurger
and Xue \cite{xue1besselgeneric}. Consequently, Conjecture
\ref{conjec:ggp:intro} over $\BC$ was established in \cite{chen2021local}.
Part (1) over $\BC$ had also been proved in \cite{mollers2017symmetry} in the
codimension-one case.

It remains to prove Part (2) over $\BR$ for tempered parameters. This is the
main result of the present paper.

\begin{thm}\label{thm:toprove}
Over $\BR$, Part (2) of Conjecture \ref{conjec:ggp:intro} holds for tempered
local $L$-parameters.
\end{thm}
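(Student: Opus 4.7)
The plan is to adapt Waldspurger's proof of the epsilon dichotomy from the non-archimedean case (\cite{waldspurgertemperedggp}\cite{MR3155345}) to the real setting, combining the integral multiplicity formula already used to establish Part (1) over $\BR$ in \cite{thesis_zhilin} with Shelstad's theory of endoscopic character identities for real reductive groups. For a tempered $L$-parameter $\vphi$ and $s \in \CS_\vphi$, form the signed multiplicity
$$
m(\vphi, s) := \sum_{\pi\in \Pi^{\Vg}_\rel(\vphi)} \chi_\pi(s)\, m(\pi).
$$
The target identity is $m(\vphi, s) = \chi_\vphi(s)$ for every $s \in \CS_\vphi$. Once this is established, Fourier inversion on the finite abelian group $\CS_\vphi$, combined with Part (1) (which isolates a unique distinguished representation), identifies $\chi_{\pi_\vphi}$ with $\chi_\vphi$ and yields Theorem \ref{thm:toprove}.

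To compute $m(\vphi, s)$, I would first substitute the archimedean geometric multiplicity formula of the second author into each $m(\pi)$, rewriting $m(\vphi, s)$ as an integral over semisimple regular conjugacy classes in $H$ of the quantity $\sum_\pi \chi_\pi(s)\,\Theta_\pi$, where $\Theta_\pi$ denotes the (normalized) Harish-Chandra character. By Shelstad's endoscopic character identities, as manifested in the Vogan parametrization of real tempered packets (\cite{MR1216197}), this virtual character transfers, up to the standard Shelstad transfer factor, to a stable character on the endoscopic group $G^s$ attached to $s$. Invoking the matching of orbital integrals under Langlands--Shelstad transfer then reinterprets $m(\vphi, s)$ as a Gross--Prasad type geometric integral on $G^s$; the latter group is again a product of special orthogonal groups of the Gross--Prasad type, and the transferred $L$-parameter splits as $\vphi = \vphi^+ \oplus \vphi^-$ according to the $\pm 1$ eigenspaces of $s$.

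Applying Part (1) to the factors of $G^s$ and evaluating the resulting germs, one obtains a closed-form expression for $m(\vphi, s)$ as a product of local root numbers of $\vphi^+ \otimes \vphi^-$ with Whittaker normalization. The final combinatorial step is to match this product with $\chi_\vphi(s)$: by the Gross--Prasad recipe, $\chi_\vphi(s)$ is manufactured from precisely the symplectic constituents of the tensor parameter that appear in the transferred expression, so the identification amounts to tracking Langlands' sign conventions for $\epsilon$-factors through the decomposition.

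The principal obstacle is the endoscopic transfer step over $\BR$. Unlike the $p$-adic case, where Waldspurger exploits explicit Shalika germ expansions and finite sums over endoscopic tori, here one must transport the analytic data (Haar measures, Shelstad transfer factors, weight functions on noncompact Cartan subgroups, continuity of characters in families) through the character identities and verify compatibility with the Gross--Prasad geometric kernel on $G^s$. A further subtlety is the coherent normalization of Whittaker data across pure inner forms over $\BR$: the sign conventions relating the Kottwitz--Shelstad normalization, the Vogan parametrization, and the Gross--Prasad recipe for $\chi_\vphi$ must be reconciled so that the identification $m(\vphi, s) = \chi_\vphi(s)$ holds exactly, rather than merely up to an unknown global sign.
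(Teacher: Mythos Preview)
Your proposal tracks Waldspurger's $p$-adic strategy through the ordinary endoscopy step, and that part is indeed what the paper does: one rewrites $m(\vphi,s)$ via the geometric multiplicity formula and uses Shelstad's character identities to express it in terms of stable multiplicities $m^S_\geom$ on the elliptic endoscopic group $G^s = G_+\times G_-$ attached to $s$ (the paper's Proposition~\ref{pro:endosopicid} and Proposition~\ref{pro:analgouewaldspurger3.3}). The technical issues you flag (Kottwitz signs across pure inner forms, normalization of germs) are real and are handled in the paper via Lemma~\ref{lem:thecalculationofj}, Corollary~\ref{cor:germfunaslimitofquasichar}, and Proposition~\ref{pro:unionpureinnerconjclassof}.

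The gap is in the next step. You write that ``applying Part~(1) to the factors of $G^s$ and evaluating the resulting germs, one obtains a closed-form expression for $m(\vphi,s)$ as a product of local root numbers.'' Part~(1) for $G^s$ only gives multiplicity one in the endoscopic packet; it does not produce $\epsilon$-factors. In Waldspurger's $p$-adic argument the passage from $m^S_\geom$ to root numbers is \emph{not} a germ evaluation: it requires a second transfer, namely twisted endoscopy to a pair of twisted general linear groups (\cite[Prop.~3.4]{MR3155345}), followed by the computation of the twisted Gross--Prasad multiplicity on the $\GL$ side carried out in \cite{MR3155344}. Neither ingredient is available over $\BR$, and the paper explicitly notes that the archimedean analogue of \cite{MR3155344} is not established.

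The paper's substitute is an induction on $(\dim V,\dim W)$ resting on a structure theorem for real tempered parameters (Theorem~\ref{thm:classifyparameters}): any such $\vphi_V$ with $\dim V>3$ is either of parabolic type (P) or has $S_{\vphi_V}\not\subset Z\cdot S_{\vphi_V}^\circ$ (type (E)). Type (P) reduces by \cite[Cor.~7.3.1]{thesis_zhilin} and a character compatibility. For type (E), after the ordinary endoscopy step one lands on $m^S_{W_+,V_-}\cdot m^S_{W_-,V_+}$ with $\dim V_\pm<\dim V$; the induction hypothesis (Part~(2), not merely Part~(1)) for these smaller admissible pairs identifies each factor with a value of the Gross--Prasad character $\chi_{\vphi_\pm}$, and a direct $\epsilon$-factor identity finishes. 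The base case $\dim V\le 3$ is \cite[Prop.~7.4]{MR1295124}. This inductive device is what replaces twisted endoscopy, and it is the main idea missing from your outline.
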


The proof follows the strategy of Waldspurger \cite{MR3155345}, with
two archimedean modifications. We first recall the $p$-adic mechanism. By Part
(1) of Conjecture \ref{conjec:ggp:intro}, it is enough to prove, for every
$s\in \CS_\vphi$, the identity
\begin{equation}\label{eq:intro:2}
\sum_{\pi\in \Pi^\Vg_{\rel}(\vphi)}
\chi_{\pi}(s)m(\pi)
=
\chi_{\vphi}(s).
\end{equation}
Waldspurger's geometric multiplicity formula
\cite{waldspurger10,waldspurgertemperedggp} gives
\[
m(\pi)=m_\geom(\pi)
\]
for tempered representations, where $m_\geom(\pi)$ is expressed in terms of
the germ expansion of the distribution character of $\pi$. Thus
\eqref{eq:intro:2} is equivalent to
\[
\sum_{\pi\in \Pi^\Vg_{\rel}(\vphi)}
\chi_{\pi}(s)m_\geom(\pi)
=
\chi_{\vphi}(s).
\]
For each $s\in \CS_\vphi$, ordinary endoscopy attaches to $G$ an endoscopic
group
\[
G_{1,s}\times G_{2,s},
\]
where each $G_{i,s}$ is again of Gross--Prasad type, attached to a pair of
quadratic spaces $(W_i,V_i)$. The corresponding endoscopic character identity
gives
\[
\sum_{\pi\in \Pi^\Vg_{\rel}(\vphi)}
\chi_{\pi}(s)m_\geom(\pi)
=
m^S_\geom(\vphi_1)m^S_\geom(\vphi_2),
\]
as established in \cite[Prop.~3.3]{MR3155345}. Here $m^S_\geom$ denotes the
stable geometric multiplicity introduced in \cite[\S 3.2]{MR3155345}. In the
$p$-adic proof, twisted endoscopy and the twisted character identity
\cite[\S 1.6,\S 1.8]{MR3155345} identify this stable geometric term with the
geometric multiplicity for a pair of twisted general linear groups of
Gross--Prasad type; the latter is computed in \cite{MR3155344} and gives
$\chi_\vphi(s)$.

Over $\BR$, the argument closes without twisted endoscopy. Using
\cite[Thm.~4.4]{moeglin2020paquets}\cite[\S 6]{renard2024generic}, we first reduce the proof of Theorem \ref{thm:toprove}
to parameters of good parity; equivalently, the parameters have only
$\mathrm{O}$-type irreducible constituents, as recalled in
Subsection \ref{subsec:conjecofggp}. For such parameters, Waldspurger's
ordinary endoscopic reduction \cite{MR3155345} reduces the proof to the case
where the component group is either trivial or isomorphic to $\BZ/2\BZ$. A
subsequent parabolic reduction reduces these cases to small ranks. The
remaining small-rank cases are then verified directly by lower rank coincidence and theta correspondence.

Two archimedean points require separate treatment. First, over $p$-adic fields
there are only two relevant pairs
$(W_\alp,V_\alp)_{\alp\in H^1(F,H)}$, while over $\BR$ the set of pure inner
forms is larger. We organize the relevant families by the Kottwitz sign of
$G_\alp$ \cite{kottsign}. Second, Waldspurger's proof of
\cite[Lem.~13.4~(ii)]{waldspurger10}, an essential input in the geometric
multiplicity argument, uses \cite[Conj.~1.2]{waldtransfert}. The corresponding
archimedean statement was established only recently by the authors in
\cite{chen2025fourier}. In the present paper, the required form is obtained
from a formula of Rossmann \cite{MR508985}. These two points are treated in
Sections \ref{sec:regconj} and \ref{sec:geomultiplicity}.

We finally recall the parallel unitary and Fourier--Jacobi settings. Gan,
Gross and Prasad formulated analogous conjectures for unitary groups in
\cite{ggporiginal}. In the Bessel case, Beuzart-Plessis proved the tempered
$p$-adic unitary case following Waldspurger
\cite{raphaelpadic,MR3251763}, while Gan and Ichino proved the generic
$p$-adic case \cite{MR3573972} following M\oe glin--Waldspurger.
Over $\BR$, Beuzart-Plessis proved Part (1) for tempered unitary parameters
\cite{beuzart2015local}; Xue proved Part (2) in the tempered case
\cite{xue1bessel} and reduced the generic case to the tempered case
\cite{xue1besselgeneric}. The Fourier--Jacobi cases of \cite{ggporiginal}
involve Weil representations and concern skew-hermitian unitary groups and
symplectic-metaplectic groups. Over $p$-adic fields, these cases were resolved
by Gan--Ichino for skew-hermitian unitary groups \cite{MR3573972}, and by Atobe
for symplectic-metaplectic groups \cite{MR3788848}, using theta-correspondence
methods that reduce the Fourier--Jacobi case to the Bessel case.

\subsubsection{Organization}

Throughout the paper we work over $\BR$.

In Section \ref{sec:localgp} we fix notation and conventions and recall the local Gross--Prasad conjecture.

Sections \ref{sec:regconj} and \ref{sec:geomultiplicity} establish the
archimedean inputs needed for the endoscopic reduction.

\begin{itemize}
    \item
    In Section \ref{sec:regconj}, we review the parametrization of regular
    semisimple conjugacy classes in special orthogonal groups, following
    \cite[\S 1.3]{MR2672539} and \cite[\S 1.3, \S 1.4]{MR3155345}. We then
    prove Proposition \ref{pro:unionpureinnerconjclassof}, the archimedean
    analogue of the description of the fibers of $p_G$ in
    \cite[\S 1.4]{MR3155345}. This proposition describes the union of regular
    semisimple conjugacy classes over pure inner forms with fixed Kottwitz
    sign.

    \item
    In Section \ref{sec:geomultiplicity}, we recall the geometric multiplicity
    formula of \cite{thesis_zhilin}. We then prove
    Lemma \ref{lem:thecalculationofj}, the archimedean analogue of
    \cite[Lem.~13.4~(ii)]{waldspurger10}. As a consequence, we obtain
    Corollary \ref{cor:germfunaslimitofquasichar}, corresponding to
    \cite[\S 13.6]{waldspurger10}, which expresses the germs of a distribution
    character in terms of the distribution character itself. We also recall the
    stable geometric multiplicity introduced in \cite[\S 3.2]{MR3155345}, and
    prove Lemma \ref{lem:fiberoverkappappunionoverpure}, describing the union
    of the supports of the geometric multiplicity formula over Gross--Prasad
    triples with fixed Kottwitz sign.
\end{itemize}

In Section \ref{sec:redtoendoscopypf}, we carry out the reduction of
Theorem \ref{thm:toprove}. Parabolic reduction first reduces to the good-parity
case in the sense of \cite[\S 4.1]{moeglin2020paquets}. Endoscopic reduction
then reduces to the basic small-rank cases. The proof is completed by case-by-case verification of basic cases.

\medskip
\noindent\textbf{Acknowledgments.}
We thank D. Jiang for suggesting the problem and for helpful comments. We thank
C. Wan for helpful discussions and suggestions concerning the
endoscopic reduction. The first author thanks R. Chen for helpful information
on theta correspondence used in the proof of the basic cases, and C. Lo for
helpful discussions on the local Langlands correspondence. This work was
supported in part by a Research Assistantship from NSF grant DMS-1901802.
Z. Luo thanks the Department of Mathematics at the University of Chicago for
research support through the Dickson Instructorship, and Purdue University for
startup support. During an essential revision of the article, the first author
was supported by the European Union's Horizon 2020 research and innovation
programme under the Marie Skłodowska-Curie grant agreement No. 101034255. We
thank the anonymous referee for helpful comments and suggestions.

\section{Local Gross--Prasad conjecture}\label{sec:localgp}

We recall the local Gross--Prasad conjecture following \cite{MR1186476,MR1295124}.

\subsection{Gross--Prasad triples}\label{subsec:gptriples}

We recall the notion of Gross--Prasad triples
\cite[\S 6]{thesis_zhilin}.

Let $(W,V)$ be a pair of non-degenerate quadratic spaces over $\BR$. The pair
$(W,V)$ is called \textbf{admissible} if there exist an anisotropic line
$D=\BR z_0$ and a split non-degenerate quadratic space $Z$ of dimension $2r$
over $\BR$ such that
\[
V\simeq W\oplus^\perp D\oplus^\perp Z.
\]
Let $q$ denote the quadratic form on $V$. Choose a basis $\{z_i\}_{i=\pm 1}^{\pm r}$ of $Z$ such that
\[
q(z_i,z_j)=\del_{i,-j},\qquad i,j\in \{\pm 1,\ldots,\pm r\}.
\]
Let $N$ be the unipotent radical of the parabolic subgroup of $\SO(V)$
stabilizing the totally isotropic flag
\[
\langle z_r\rangle
\subset
\langle z_r,z_{r-1}\rangle
\subset \cdots \subset
\langle z_r,\ldots,z_1\rangle .
\]
Set
\[
G=\SO(W)\times \SO(V).
\]
We regard $\SO(W)$ as an algebraic subgroup of $G$ by the diagonal embedding;
hence $\SO(W)$ acts on $N$ by conjugation. Put
\[
H=\SO(W)\ltimes N.
\]
Define a morphism $\lam:N\to \BG_a$ by
\[
\lam(n)=\sum_{i=0}^{r-1}q(z_{-i-1},nz_i),\qquad n\in N.
\]
The morphism $\lam$ is invariant under conjugation by $\SO(W)$, and therefore
extends uniquely to a morphism on $H$ which is trivial on $\SO(W)$; we denote
this extension again by $\lam$. Let
\[
\lam_\BR:H(\BR)\to \BR
\]
be the induced map on real points. Fix a nontrivial unitary character $\psi$ of
$\BR$, and define
\[
\xi(h)=\psi(\lam_\BR(h)),\qquad h\in H(\BR).
\]

\begin{defin}\label{defin:ggptriple}
With the notation above, $(G,H,\xi)$ is called the \textbf{Gross--Prasad triple}
attached to the admissible pair $(W,V)$.
\end{defin}

\subsection{\texorpdfstring{Vogan $L$-packets}{Vogan L-packets}}\label{subsec:vgLpacket}

We recall the Vogan $L$-packets of special orthogonal groups over $\BR$,
following \cite[\S 3]{MR1186476} and \cite{MR1216197}.

Let $\CL_\BR$ be the Weil group of $\BR$. By the local Langlands correspondence
over $\BR$ \cite{langlandsclassify}, for a reductive algebraic group $G$ over
$\BR$, local $L$-parameters for $G$ determine finite local $L$-packets of
irreducible Casselman--Wallach representations of $G(\BR)$
\cite{MR1013462,MR1170566}. 

A local $L$-parameter is a $\wh{G}$-conjugacy class
of admissible homomorphisms
\[
\vphi:\CL_\BR\to \LG
\]
whose image consists of semisimple elements. Here $\wh{G}$ is the dual group of
$G$, and $\LG$ is its Langlands dual group. The parameter $\vphi$ is called
\textbf{tempered} if its image is bounded.

Pure inner forms of $G$ share the same (Langlands) dual groups as $G$. Thus, a local $L$-parameter for $G$ is also a local $L$-parameter for every pure
inner form of $G$. Following Vogan \cite{MR1216197}, one therefore works not
with a single local $L$-packet but with the Vogan $L$-packet
\[
\bigsqcup_{G^\p}\Pi^{G^\p}(\vphi),
\]
where $G^\p$ runs over the isomorphism classes of pure inner forms of $G$.

We now specialize to special orthogonal groups. Let $(V,q)$ be a
non-degenerate quadratic space over $\BR$. The pure inner forms of $\SO(V)$ are
classified by
\[
H^1(\BR,\SO(V)),
\]
which identifies with the set of quadratic spaces over $\BR$ having the same
dimension and discriminant as $V$ \cite[\S 8]{MR1295124}. Quadratic spaces over
$\BR$ are classified by their signatures $(p,q)$, where
\[
p=\PI(V),\qquad q=\NI(V).
\]
For a quadratic space of signature $(p,q)$, the discriminant is
\begin{equation}\label{eq:discriminantdef}
\disc(V)=(-1)^{\floor{\frac{\dim V}{2}}}(-1)^q
\in \{\pm 1\}\simeq \BR^\times/\BR^{\times 2}.
\end{equation}
Thus the pure inner forms of $\SO(p,q)$ are precisely
\begin{equation}\label{eq:pureinnerofSOpq}
\SO(p_\alp,q_\alp),
\qquad
p_\alp+q_\alp=p+q,\qquad
p_\alp\equiv p\pmod{2}.
\end{equation}

Among the pure inner forms of $\SO(V)$, the quasi-split and split forms are as
follows.
\begin{num}
\item\label{conjec:pureinner:1}
If $\dim V$ is odd, or if $\dim V$ is even and
\[
\PI(V)-\NI(V)\equiv 0\pmod{4},
\]
then $\SO(V)$ has a unique quasi-split pure inner form, and this form is split
over $\BR$.

\item\label{conjec:pureinner:2}
If $\dim V$ is even and
\[
\PI(V)-\NI(V)\equiv 2\pmod{4},
\]
then $\SO(V)$ has two quasi-split pure inner forms,
\[
\SO(p+2,p),\qquad \SO(p,p+2),
\qquad p=\frac{\dim V}{2}-1.
\]
These two groups are isomorphic as inner forms, but not as pure inner forms.
\end{num}

Two admissible pairs $(W,V)$ and $(W^\p,V^\p)$ are called
\textbf{relevant} if
\begin{align*}
\dim W=\dim W^\p,\qquad
\disc(W)=\disc(W^\p),\qquad
\dim V=\dim V^\p,\qquad
\disc(V)=\disc(V^\p).
\end{align*}
Fix an admissible pair $(W,V)$ with associated Gross--Prasad triple
$(G,H,\xi)$. For each $\alp\in H^1(\BR,\SO(W))$, let $W_\alp$ be the
corresponding quadratic space and set
\[
V_\alp=W_\alp\oplus^\perp W^\perp.
\]
This gives a unique admissible pair $(W_\alp,V_\alp)$ relevant to $(W,V)$, with
associated Gross--Prasad triple $(G_\alp,H_\alp,\xi_\alp)$. For a local
$L$-parameter
\[
\vphi:\CL_\BR\to \LG,
\]
define the \textbf{relevant Vogan $L$-packet} by
\begin{equation}\label{eq:relvgpacket}
\Pi^{\Vg}_\rel(\vphi)
=
\bigsqcup_{\alp\in H^1(\BR,\SO(W))}
\Pi^{G_\alp}(\vphi).
\end{equation}

\begin{rmk}\label{rmk:uniqueqsggp}
For an admissible pair $(W,V)$, it may occur that $\SO(W)$ or $\SO(V)$ has two
quasi-split pure inner forms in the sense of \eqref{conjec:pureinner:2}. Since
$\dim V-\dim W$ is odd, the relevance condition nevertheless singles out a
unique admissible pair $(W_\qs,V_\qs)$ relevant to $(W,V)$ such that
$\SO(W_\qs)\times \SO(V_\qs)$ is quasi-split.
\end{rmk}

\subsection{The conjecture}\label{subsec:conjecofggp}

We recall the distinguished character of Gross--Prasad
\cite[\S 10]{MR1186476} and the corresponding local conjecture
\cite{MR1186476,MR1295124}.

Fix a Whittaker datum for $G$, namely a quasi-split pure inner form of $G$
together with a Borel subgroup over $\BR$ and a generic character of the
unipotent radical. A local $L$-parameter $\vphi$ is called \textbf{generic} if
the Vogan $L$-packet $\Pi^{\Vg}(\vphi)$ contains a generic representation.

We use the following form of Vogan's parametrization \cite[Thm.~6.3]{MR1216197}.

\begin{num}
\item\label{num:shelstadvogandual}
Let $G$ be a reductive algebraic group over $\BR$, and let
\[
\vphi:\CL_\BR\to \LG
\]
be a local $L$-parameter. Put
\[
S_\vphi=Z_{\wh{G}}(\Im(\vphi)),\qquad
\CS_\vphi=\pi_0(S_\vphi).
\]
After fixing a Whittaker datum for $G$, every generic $L$-parameter $\vphi$
admits a bijection between $\Pi^{\Vg}(\vphi)$ and the group of characters of
$\CS_\vphi$, normalized so that the trivial character corresponds to the
generic representation.
\end{num}

We now describe the component group for special orthogonal groups. Let $V$ be a
non-degenerate quadratic space over $\BR$, and let $\vphi_V$ be a local
$L$-parameter of $\SO(V)$. Composing $\vphi_V$ with the standard embedding of
${}^L\SO(V)$ into $\GL(\RM_V)$ gives a representation
\[
\mathrm{std}\circ \vphi_V:\CL_\BR\to \GL(\RM_V).
\]
This representation preserves a non-degenerate $\CL_\BR$-invariant bilinear
form
\[
\RB:\RM_V\times \RM_V\to \BC
\]
of sign $\eps\in\{\pm 1\}$. If $\dim V$ is odd, then $\RB$ is symplectic and
$\eps=-1$; if $\dim V$ is even, then $\RB$ is symmetric and $\eps=1$.

By semisimplicity,
\[
\RM_V=\bigoplus_i m_i\RM_{i,V},
\]
where the $\RM_{i,V}$ are pairwise non-isomorphic irreducible representations
of $\CL_\BR$. Following \cite[Prop.~6.5, Prop.~7.6]{MR1186476}, the irreducible
summands fall into the following three types:
\begin{num}
\item[]
\begin{enumerate}
\item[($\RO$-type)]
$\RM_{i,V}\simeq \RM_{i,V}^\vee$, and $\RM_{i,V}$ carries a non-degenerate
$\CL_\BR$-invariant pairing of sign $\eps$;

\item[($\Sp$-type)]
$\RM_{i,V}\simeq \RM_{i,V}^\vee$, and $\RM_{i,V}$ carries a non-degenerate
$\CL_\BR$-invariant pairing of sign $-\eps$. In this case $m_i$ is even;

\item[($\GL$-type)]
$\RM_{i,V}\not\simeq \RM_{i,V}^\vee$. Then
$\RM_{i,V}^\vee\simeq \RM_{j,V}$ for some $j\neq i$, and $m_i=m_j$.
\end{enumerate}
\end{num}

Let $\RI_\RO$ and $\RI_\Sp$ be the index sets of the $\RO$-type and
$\Sp$-type summands, respectively. Let $\RI_\GL$ index the unordered pairs
$\RM_{i,V}\oplus \RM_{i,V}^\vee$ of $\GL$-type. By
\cite[Prop.~6.6, Prop.~7.7]{MR1186476}, the centralizer of the image of
$\vphi_V$ is
\begin{equation}\label{eq:SvphiVdescription}
S_{\vphi_V}
=
\bigg(
\prod_{i\in \RI_\RO}\RO(m_i,\BC)
\bigg)_+
\times
\prod_{i\in \RI_\Sp}\Sp(m_i,\BC)
\times
\prod_{i\in \RI_\GL}\GL(m_i,\BC),
\end{equation}
where
\[
\bigg(
\prod_{i\in \RI_\RO}\RO(m_i,\BC)
\bigg)_+
=
S\bigg(
\prod_{\substack{i\in \RI_\RO\\ \dim \RM_{i,V}\ \mathrm{odd}}}
\RO(m_i,\BC)
\bigg)
\times
\prod_{\substack{i\in \RI_\RO\\ \dim \RM_{i,V}\ \mathrm{even}}}
\RO(m_i,\BC).
\]
Here
\[
S\bigg(
\prod_{\substack{i\in \RI_\RO\\ \dim \RM_{i,V}\ \mathrm{odd}}}
\RO(m_i,\BC)
\bigg)
\]
denotes the subgroup on which the product of determinants is equal to $1$.
Consequently,
\begin{equation}\label{eq:component}
\CS_{\vphi_V}
\simeq
\begin{cases}
(\BZ/2\BZ)^r,
& \text{if every $\RO$-type $\RM_{i,V}$ has even dimension},\\
(\BZ/2\BZ)^{r-1},
& \text{otherwise},
\end{cases}
\end{equation}
where $r=|\RI_\RO|$.

We now recall the distinguished character defined by Gross and Prasad in
\cite[\S 10]{MR1186476}. Let $(G,H,\xi)$ be the Gross--Prasad triple attached
to an admissible pair $(W,V)$ over $\BR$, and let
\[
\vphi=\vphi_W\times \vphi_V
\]
be a local $L$-parameter of
\[
G=\SO(W)\times \SO(V).
\]
After fixing the Whittaker datum for $G$, Vogan's parametrization gives a
pairing
\[
\Pi^{\Vg}(\vphi)\times \CS_\vphi\to \{\pm 1\}.
\]
Thus every $\pi\in \Pi^{\Vg}(\vphi)$ determines a character
\[
\chi_\pi:\CS_\vphi\to \{\pm 1\}.
\]
Since
$
\CS_\vphi=\CS_{\vphi_W}\times \CS_{\vphi_V},
$
Gross and Prasad define a character
\[
\chi_\vphi=\chi^V_{\vphi_W}\times \chi^W_{\vphi_V}
\]
of $\CS_\vphi$ as follows. For
$
s=s_W\times s_V\in \CS_{\vphi_W}\times \CS_{\vphi_V},
$
set
\begin{align}\label{eq:chivphicharacter:1}
\chi^W_{\vphi_V}(s_V)
&=
\det(\RM_V^{s_V=-1})^{\frac{\dim \RM_W}{2}}(-1)
\det(\RM_W)^{\frac{\dim \RM_V^{s_V=-1}}{2}}(-1)
\veps\bigg(\frac{1}{2},\RM_V^{s_V=-1}\otimes \RM_W,\psi\bigg),
\\
\chi^V_{\vphi_W}(s_W)
&=
\det(\RM_W^{s_W=-1})^{\frac{\dim \RM_V}{2}}(-1)
\det(\RM_V)^{\frac{\dim \RM_W^{s_W=-1}}{2}}(-1)
\veps\bigg(\frac{1}{2},\RM_W^{s_W=-1}\otimes \RM_V,\psi\bigg).
\nonumber
\end{align}
Here $\RM_V^{s_V=-1}$ and $\RM_W^{s_W=-1}$ denote the $(-1)$-eigenspaces of
$s_V$ and $s_W$, respectively; $\det \RM_V$ and $\det \RM_W$ denote determinant
characters of Weil group representations; and $\veps(\cdots)$ denotes the
corresponding local root number.

We now state the local Gross--Prasad conjecture. Let $\pi$ be an irreducible
Casselman--Wallach representation of $G(\BR)$ and set
\begin{equation}\label{eq:multiplicity}
m(\pi)=\dim \Hom_{H(\BR)}(\pi,\xi).
\end{equation}
By \cite{szmut1,MR2720228},
\[
m(\pi)\leq 1.
\]
The conjecture determines which member of the relevant Vogan $L$-packet has
nonzero multiplicity.

\begin{conjec}\label{conjec:gp}
Let $(G,H,\xi)$ be the Gross--Prasad triple attached to an admissible pair
$(W,V)$ over $\BR$. Fix a generic local $L$-parameter $\vphi$ of $G$. Then:

\begin{enumerate}
\item
There exists a unique representation
\[
\pi_\vphi\in \Pi^\Vg_{\rel}(\vphi)
\]
such that
\[
m(\pi_\vphi)=1.
\]

\item
Fix the Whittaker datum for $G$ as in \cite[(6.3)]{MR1295124}. Under the
parametrization in \ref{num:shelstadvogandual}, the character attached to
$\pi_\vphi$ satisfies
\[
\chi_{\pi_\vphi}=\chi_\vphi,
\]
where $\chi_\vphi$ is defined in \eqref{eq:chivphicharacter:1}.
\end{enumerate}
\end{conjec}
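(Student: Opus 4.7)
The plan is to follow the strategy of \cite{MR3155345}, adapting it to the archimedean setting. By Part (1) of Conjecture \ref{conjec:ggp:intro}, which is already established over $\BR$ in \cite{thesis_zhilin}, there is a unique $\pi_\vphi \in \Pi^\Vg_{\rel}(\vphi)$ with $m(\pi_\vphi)=1$. To identify its character $\chi_{\pi_\vphi}$ with $\chi_\vphi$, it suffices by the non-degeneracy of the Vogan pairing to prove, for every $s\in \CS_\vphi$, the identity
\begin{equation*}
\sum_{\pi\in \Pi^\Vg_{\rel}(\vphi)} \chi_\pi(s)\cdot m(\pi) = \chi_\vphi(s).
\end{equation*}
The first reduction I would carry out is to replace $m(\pi)$ by the geometric multiplicity $m_\geom(\pi)$ via the multiplicity formula of \cite{thesis_zhilin}, so that the left-hand side becomes a weighted sum of integrals against distribution characters on certain regular semisimple conjugacy classes in pure inner forms of $H$.

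Next I would set up the endoscopic decomposition. For $s\in \CS_\vphi$, ordinary (not twisted) endoscopy produces an endoscopic group $G_{1,s}\times G_{2,s}$, coming from admissible pairs $(W_i,V_i)$, together with parameters $\vphi_i$. Using the endoscopic character relation over $\BR$ and the description of regular semisimple conjugacy classes obtained in Proposition \ref{pro:unionpureinnerconjclassof}, the weighted character sum should be rewritten as a product $m^S_\geom(\vphi_1)\cdot m^S_\geom(\vphi_2)$ of stable geometric multiplicities, the archimedean analogue of \cite[Prop.~3.3]{MR3155345}. Here two archimedean subtleties must be addressed: first, the set $H^1(\BR,H)$ has more than two classes, which I would handle by organizing pure inner forms according to their Kottwitz sign as in Lemma \ref{lem:fiberoverkappappunionoverpure}; second, Waldspurger's \cite[Lem.~13.4~(ii)]{waldspurger10} relies on \cite[Conj.~1.2]{waldtransfert}, which is unavailable over $\BR$, and I would substitute Rossmann's character formula \cite{MR508985} via Lemma \ref{lem:thecalculationofj} and Corollary \ref{cor:germfunaslimitofquasichar} to extract the germ of the distribution character from the character itself.

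The final step is an induction on $\dim V + \dim W$, using the classification in Theorem \ref{thm:classifyparameters} that, outside the base cases of dimension $\leq 3$, every tempered $L$-parameter of a special orthogonal group over $\BR$ is either of parabolic induction type or of endoscopic induction type. In the parabolic case, \cite[Prop.~7.4]{MR1295124} propagates the epsilon dichotomy from the inducing datum, while in the endoscopic case the product formula above reduces $\chi_\vphi(s)$ to a product $\chi_{\vphi_1}(s_1)\chi_{\vphi_2}(s_2)$ on strictly smaller groups, to which the induction hypothesis applies. Crucially, because we only need ordinary endoscopy, no archimedean analogue of \cite{MR3155344} is required; the factor $\chi_\vphi(s)$ arising from twisted endoscopy in the $p$-adic argument is replaced by the direct application of the Gross-Prasad epsilon formula to the two endoscopic constituents.

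The principal obstacle I anticipate is the endoscopic identity itself: matching the stable geometric multiplicities across pure inner forms with a fixed Kottwitz sign in a way that is compatible with the normalizations of the endoscopic transfer over $\BR$. Controlling the germ expansion of distribution characters at the relevant semisimple elements without the $p$-adic homogeneity tools, and instead through Rossmann's formula, is the technical heart of the argument and is where most of the work of Sections \ref{sec:regconj} and \ref{sec:geomultiplicity} is invested before the induction in Section \ref{sec:redtoendoscopypf} can be executed.
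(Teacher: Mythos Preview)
Your proposal is correct and follows essentially the same approach as the paper. One minor correction: \cite[Prop.~7.4]{MR1295124} is invoked in the paper for the base case $\dim V\leq 3$, not for the parabolic case; the parabolic reduction instead uses \cite[Cor.~7.3.1]{thesis_zhilin} for the multiplicity together with a direct comparison of epsilon characters following \cite[p.~988]{MR1186476}.
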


For tempered local $L$-parameters, Part (1) of Conjecture \ref{conjec:gp} was
proved by the second author in \cite{thesis_zhilin}, following Waldspurger
\cite{waldspurger10,MR3155345} and Beuzart-Plessis
\cite{beuzart2015local}. Following \cite{MR3155346}, the first author reduced
Conjecture \ref{conjec:gp} for generic local $L$-parameters over $\BR$ to the
tempered case in \cite{chen2021local}. Thus the remaining assertion over
$\BR$ is Part (2) for tempered parameters. This is the main theorem of the
paper.

\begin{thm}\label{thm:main}
Over $\BR$, Part (2) of Conjecture \ref{conjec:gp} holds for tempered local
$L$-parameters.
\end{thm}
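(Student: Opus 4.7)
The plan is to adapt Waldspurger's proof from \cite{MR3155345} to the real case. By Part (1) of Conjecture \ref{conjec:gp}, proven in \cite{thesis_zhilin}, establishing Theorem \ref{thm:main} is equivalent to proving the character identity
\begin{equation*}
\sum_{\pi\in \Pi^\Vg_{\rel}(\vphi)} \chi_{\pi}(s)\cdot m(\pi) = \chi_{\vphi}(s)
\end{equation*}
for every $s\in \CS_\vphi$. Using the geometric multiplicity formula recalled in section \ref{sec:geomultiplicity}, we may replace $m(\pi)$ by $m_\geom(\pi)$ throughout. The case $s=1$ is immediate: $\chi_\vphi(1)=1$, and Part (1) already supplies $\sum_{\pi} m_\geom(\pi)=1$. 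So the real content is the case $s\neq 1$.

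The argument proceeds by induction on $\dim V$. For small dimensions ($\dim V\leq 3$) the identity is verified directly. For the inductive step we exploit Theorem \ref{thm:classifyparameters}, which over $\BR$ asserts that every tempered local $L$-parameter of a special orthogonal group is either of \emph{parabolic-induction type} or of \emph{endoscopic-induction type}. In the parabolic-induction case, we apply \cite[Prop.~7.4]{MR1295124} to reduce the identity for $\vphi$ to the identity for the smaller-rank inducing parameter, which is handled by the inductive hypothesis. In the endoscopic-induction case, the nontrivial element $s\in \CS_\vphi$ determines an endoscopic decomposition $\vphi|_{{}^L(G_{1,s}\times G_{2,s})} = \vphi_1\times \vphi_2$ into local $L$-parameters for a pair of strictly smaller Gross--Prasad triples. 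We then apply the endoscopic character relations, together with Lemma \ref{lem:fiberoverkappappunionoverpure} and Proposition \ref{pro:unionpureinnerconjclassof} to organize the sum over pure inner forms via the Kottwitz sign, and rewrite
\begin{equation*}
\sum_{\pi\in \Pi^\Vg_{\rel}(\vphi)} \chi_{\pi}(s)\cdot m_\geom(\pi) = m^S_\geom(\vphi_1)\cdot m^S_\geom(\vphi_2).
\end{equation*}
The distinguished character $\chi_\vphi(s)$ defined in \eqref{eq:chivphicharacter:1} factors compatibly into $\chi_{\vphi_1}\cdot \chi_{\vphi_2}$ on the smaller component groups, and the inductive hypothesis applied to each $\vphi_i$ identifies $m^S_\geom(\vphi_i)$ with the corresponding distinguished character value. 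This closes the induction.

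The main obstacle I expect will be Lemma \ref{lem:thecalculationofj}, the archimedean substitute for \cite[Lem.~13.4~(ii)]{waldspurger10}. Over $p$-adic fields, that lemma is derived from \cite[Conj.~1.2]{waldtransfert}, for which no $\BR$-analogue is available. Instead, I would attack the germ expansion of distribution characters directly using Rossmann's formula \cite{MR508985}, which gives an explicit description of the leading germ in terms of orbital integrals on the Lie algebra side, and then package the outcome into Corollary \ref{cor:germfunaslimitofquasichar} expressing the relevant germ as a limit of the distribution character itself. A secondary but genuine complication, absent in the $p$-adic setting, is the presence of many pure inner forms; this is handled by stratifying the pure inner forms of a Gross--Prasad triple according to the Kottwitz sign, which is precisely the role of Proposition \ref{pro:unionpureinnerconjclassof} and Lemma \ref{lem:fiberoverkappappunionoverpure}, and which makes the endoscopic comparison match up cleanly on both sides.
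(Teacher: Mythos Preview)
Your outline matches the paper's proof almost exactly: induction on $\dim V$, the trichotomy of Theorem~\ref{thm:classifyparameters}, Rossmann's formula for Lemma~\ref{lem:thecalculationofj}, the Kottwitz-sign stratification via Proposition~\ref{pro:unionpureinnerconjclassof} and Lemma~\ref{lem:fiberoverkappappunionoverpure}, and the endoscopic identity of Proposition~\ref{pro:endosopicid}. One correction: \cite[Prop.~7.4]{MR1295124} is the \emph{base case} $\dim V\le 3$, not the parabolic-induction step; for type~$(\RP)$ the paper instead uses multiplicity preservation under parabolic induction \cite[Cor.~7.3.1]{thesis_zhilin} together with the compatibility $\chi_{\vphi}\circ(i_{W_0}\times i_{V_0})=\chi_{\vphi_0}$ from \cite[p.~988]{MR1186476}.
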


\section{Some regular semisimple conjugacy classes}\label{sec:regconj}

In this section we recall the parametrization of certain regular semisimple conjugacy classes in special orthogonal groups, following
\cite[\S 1.3]{MR2672539} and \cite[\S 1.3, \S 1.4]{MR3155345}. The Lie
algebra analogue appears in \cite{wald01nilpotent}; see also
\cite[\S 5.1]{thesis_zhilin}. In Subsection \ref{subsec:unionoverpure} we
prove Proposition \ref{pro:unionpureinnerconjclassof}, which describes the
union of these parametrizations over pure inner forms with fixed Kottwitz sign.

\subsection{Parametrization}\label{subsec:parametrization}

We first recall the parametrization of the relevant regular semisimple
conjugacy classes.

\begin{num}
\item\label{num:parameterization:datum}
Consider the following data:
\begin{itemize}
\item a finite set $I$;

\item for each $i\in I$, a finite extension $F_{\pm i}$ of $\BR$ and a
quadratic étale $F_{\pm i}$-algebra $F_i$; denote by $\tau_i$ the nontrivial
automorphism of $F_i$ over $F_{\pm i}$;

\item for each $i\in I$, an element $u_i\in F_i^\times$ satisfying
\[
u_i\tau_i(u_i)=1.
\]
\end{itemize}
\end{num}

Let $\udl{\Xi}$ be the set of quadruples
\[
\kappa=(I,(F_{\pm i})_{i\in I},(F_i)_{i\in I},(u_i)_{i\in I})
\]
satisfying \ref{num:parameterization:datum}. Two quadruples
\[
\kappa=(I,(F_{\pm i})_{i\in I},(F_i)_{i\in I},(u_i)_{i\in I})
\quad \text{and}\quad 
\kappa^\p=(I^\p,(F^\p_{\pm i})_{i\in I^\p},(F^\p_i)_{i\in I^\p},
(u^\p_i)_{i\in I^\p})
\]
are called isomorphic if there exist a bijection $\iota:I\to I^\p$ and
compatible isomorphisms
\[
\iota_{\pm i}:F_{\pm i}\to F^\p_{\pm\iota(i)},\qquad
\iota_i:F_i\to F^\p_{\iota(i)}
\]
such that
\[
\iota_i(u_i)=u^\p_{\iota(i)}
\]
for every $i\in I$.

A quadruple $\kappa\in \udl{\Xi}$ is called \textbf{regular} if its only
automorphism is the identity. Let $\Xi_\reg$ be the set of isomorphism classes
of regular quadruples. For an even positive integer $d$, let $\Xi_{\reg,d}$ be
the subset of $\Xi_\reg$ consisting of classes
\[
\kappa=(I,(F_{\pm i})_{i\in I},(F_i)_{i\in I},(u_i)_{i\in I})
\]
such that
\[
\sum_{i\in I}[F_i:\BR]=d.
\]

For $\kappa\in \Xi_\reg$, let $I^*=I^*_\kappa$ be the subset of $I$ consisting
of those $i$ for which $F_i$ is a field. Equivalently, $i\in I^*$ precisely
when the quadratic character
\[
\sgn_{F_i/F_{\pm i}}:F_{\pm i}^\times\to \{\pm 1\}
\]
is nontrivial. Define
\[
C(\kappa)
=
\prod_{i\in I}
F^\times_{\pm i}/\Norm_{F_i/F_{\pm i}}(F_i^\times)
\simeq
\prod_{i\in I^*}\{\pm 1\}.
\]
For $c=(c_i)_{i\in I}\in C(\kappa)$, define a quadratic space
$(W_{\kappa,c},q_{\kappa,c})$ by
\[
W_{\kappa,c}=\bigoplus_{i\in I}F_i
\]
and
\begin{equation}\label{eq:quadkappac:1}
q_{\kappa,c}
\bigg(
\sum_{i\in I}w_i,
\sum_{i\in I}w_i^\p
\bigg)
=
\sum_{i\in I}
\tr_{F_i/\BR}\big(\tau_i(w_i)w_i^\p c_i\big),
\qquad
w_i,w_i^\p\in F_i.
\end{equation}
Here each $c_i$ is represented by an element of $F_{\pm i}^\times$. Since
$\tau_i(c_i)=c_i$, the form \eqref{eq:quadkappac:1} is symmetric. Its
isomorphism class is independent of the representatives of the classes $c_i$
\cite[\S 1.3]{MR2672539}.

The signature of $(W_{\kappa,c},q_{\kappa,c})$ is computed explicitly as
follows.

\begin{lem}\label{lem:positivenegativeindex}
Let
\[
\RI^\pm_{\BC}
=
\{i\in I\mid F_i\simeq \BC,\ c_i=\pm 1\},
\]
and let
\[
\RI_{\BR\oplus \BR}
=
\{i\in I\mid F_i\simeq \BR\oplus \BR\},
\qquad
\RI_{\BC\oplus \BC}
=
\{i\in I\mid F_i\simeq \BC\oplus \BC\}.
\]
Then
\begin{align*}
\PI(W_{\kappa,c})
&=
2|\RI^+_\BC|
+
|\RI_{\BR\oplus \BR}|
+
2|\RI_{\BC\oplus \BC}|,\\
\NI(W_{\kappa,c})
&=
2|\RI^-_\BC|
+
|\RI_{\BR\oplus \BR}|
+
2|\RI_{\BC\oplus \BC}|.
\end{align*}
\end{lem}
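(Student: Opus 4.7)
The plan is to exploit the orthogonal decomposition $W_{\kappa,c}=\bigoplus_{i\in I}F_i$ visible from \eqref{eq:quadkappac:1}: the summands are pairwise orthogonal since $q_{\kappa,c}$ is a sum of traces each supported on an individual $F_i$. Because the signature invariants $\PI$ and $\NI$ are additive over orthogonal direct sums, it suffices to compute the signature of the local form $q_i(w,w')=\tr_{F_i/\BR}(\tau_i(w)w'c_i)$ on each summand $F_i$ separately, and to sum.

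Since $F_{\pm i}$ is a finite extension of $\BR$, it is either $\BR$ or $\BC$, and $F_i$ is a two-dimensional commutative $F_{\pm i}$-algebra. The complete list of possibilities is therefore $(F_{\pm i},F_i)\in\{(\BR,\BC),(\BR,\BR\oplus\BR),(\BC,\BC\oplus\BC)\}$, since $\BC$ admits no nontrivial finite field extension. In the two split cases the norm $\mathrm{Norm}_{F_i/F_{\pm i}}$ is surjective on units (explicitly $(a,b)\mapsto ab$ and $(z,w)\mapsto zw$), so the quotient $F_{\pm i}^\times/\mathrm{Norm}_{F_i/F_{\pm i}}(F_i^\times)$ is trivial and the class of $c_i$ does not affect the isometry type of $q_i$; in the remaining case $(\BR,\BC)$ one has $c_i\in \BR^\times/\BR^\times_{>0}\simeq\{\pm 1\}$, matching the partition of $\{i\in I:F_i\simeq\BC\}$ into $\RI^+_\BC\sqcup\RI^-_\BC$ according to the sign of $c_i$.

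The four local signature computations are then routine diagonalizations. For $F_i=\BC$ with $c_i=\pm 1$, in the $\BR$-basis $\{1,\sqrt{-1}\}$ the Gram matrix of $q_i$ is $2c_i\cdot \Id_2$, giving signature $(2,0)$ when $c_i=+$ and $(0,2)$ when $c_i=-$. For $F_i=\BR\oplus\BR$ with $\tau_i$ the swap, $q_i((a,b),(a',b'))=ba'+ab'$ is a hyperbolic plane of signature $(1,1)$. For $F_i=\BC\oplus\BC$ with $\tau_i$ again the swap, $q_i((w_1,w_2),(w'_1,w'_2))=2\Re(w_1w'_2+w_2w'_1)$; writing $\BC=\BR\oplus\BR\sqrt{-1}$ this decomposes as an orthogonal sum of two hyperbolic planes, with total signature $(2,2)$. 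Assembling these contributions over $i\in I$ yields the claimed formulas for $\PI(W_{\kappa,c})$ and $\NI(W_{\kappa,c})$.

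No step appears to pose a substantive obstacle; the whole argument reduces to a case analysis plus four elementary signature computations. The two minor points I would double-check are the well-definedness (already asserted after \eqref{eq:quadkappac:1}) of $q_i$ under changing the lift of $c_i$ from $F_{\pm i}^\times/\mathrm{Norm}_{F_i/F_{\pm i}}(F_i^\times)$ back to $F_{\pm i}^\times$, and the sign convention in identifying $\BR^\times/\BR^\times_{>0}\simeq\{\pm 1\}$ so that $c_i=+$ really does correspond to a positive definite local summand.
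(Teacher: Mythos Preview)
Your proposal is correct and follows essentially the same approach as the paper: a case-by-case computation of the signature of the local form $q_i$ on each summand $F_i$ according to the three possibilities $(\BR,\BC)$, $(\BR,\BR\oplus\BR)$, $(\BC,\BC\oplus\BC)$, followed by summation. You supply a bit more detail (the explicit Gram matrices and the remark on surjectivity of the norm in the split cases), but the argument is the same.
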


\begin{proof}
There are three cases.

If $F_{\pm i}=\BR$ and $F_i=\BC$, then the summand is
\[
\tr_{\BC/\BR}(\tau_i(w_i)w_i^\p c_i).
\]
It is positive definite for $c_i=1$ and negative definite for $c_i=-1$.

If $F_{\pm i}=\BR$ and $F_i=\BR\oplus \BR$, then
\[
\tr_{F_i/\BR}\big(\tau_i(w_i^1,w_i^2)(w_i^{\p 1},w_i^{\p 2})\big)
=
w_i^1w_i^{\p 2}+w_i^2w_i^{\p 1},
\]
which has signature $(1,1)$.

If $F_{\pm i}=\BC$ and $F_i=\BC\oplus \BC$, then
\[
\tr_{F_i/\BR}
\big(\tau_i(w_i^1,w_i^2)(w_i^{\p 1},w_i^{\p 2})c_i\big)
=
c_i\tr_{\BC/\BR}
\big(w_i^1w_i^{\p 2}+w_i^2w_i^{\p 1}\big),
\]
which has signature $(2,2)$. Summing over $i\in I$ gives the assertion.
\end{proof}

\begin{rmk}\label{rmk:changeindextosign}
Let
\[
\kappa=(I,(F_{\pm i})_{i\in I},(F_i)_{i\in I},(u_i)_{i\in I})
\in \Xi_{\reg,d}.
\]
By Lemma \ref{lem:positivenegativeindex}, the isomorphism class of
$(W_{\kappa,c},q_{\kappa,c})$ is determined by the cardinalities of
$\RI_\BC^+$ and $\RI_\BC^-$. Since
\[
\RI_\BC^+\sqcup \RI_\BC^-=I^*,
\]
these cardinalities are equivalently determined by
\[
\sum_{i\in I^*}c_i=|\RI_\BC^+|-|\RI_\BC^-|.
\]
For
\[
\theta\in \{-|I^*|,-|I^*|+2,\ldots,|I^*|-2,|I^*|\},
\]
set
\[
C(\kappa)_\theta
=
\left\{
c=(c_i)\in C(\kappa)
\ \middle|\
\sum_{i\in I^*}c_i=\theta
\right\}.
\]
Then
\[
(W_{\kappa,c},q_{\kappa,c})
\simeq
(W_{\kappa,c^\p},q_{\kappa,c^\p})
\]
if and only if $c$ and $c^\p$ lie in the same $C(\kappa)_\theta$. In that case
\begin{equation}\label{eq:indexofWkappac}
\PI(W_{\kappa,c})=\frac{d}{2}+\theta,
\qquad
\NI(W_{\kappa,c})=\frac{d}{2}-\theta.
\end{equation}
\end{rmk}

Define $x_{\kappa,c}\in \GL(W_{\kappa,c})$ by
\begin{equation}\label{eq:quadkappac:2}
x_{\kappa,c}
\bigg(
\sum_{i\in I}w_i
\bigg)
=
\sum_{i\in I}u_iw_i,
\qquad w_i\in F_i.
\end{equation}
The relation $u_i\tau_i(u_i)=1$ implies
\[
x_{\kappa,c}\in \SO(W_{\kappa,c}).
\]

\begin{defin}\label{defin:parconj}
Let $(V,q)$ be a non-degenerate quadratic space over $\BR$, and put
\[
\Del_V=\PI(V)-\NI(V).
\]

\begin{enumerate}
\item
If $\dim V$ is even, define
\[
\Xi_{\reg,V}
=
\left\{
(\kappa,c)
\ \middle|\
\kappa\in \Xi_{\reg,\dim V},\quad
c\in C(\kappa)_{\frac{\Del_V}{2}}
\right\}.
\]

\item
If $\dim V$ is odd, define $\Xi_{\reg,V}$ to be the set of pairs
$(\kappa,c)$ such that
\[
\kappa\in \Xi_{\reg,\dim V-1},\qquad c\in C(\kappa),
\]
and such that there exists an anisotropic line $(D_{\kappa,V},q_{\kappa,V})$
with
\[
(W_{\kappa,c},q_{\kappa,c})
\oplus^\perp
(D_{\kappa,V},q_{\kappa,V})
\simeq
(V,q).
\]
\end{enumerate}
\end{defin}

\begin{rmk}\label{rmk:afterdefparconj}
Assume $\dim V$ is odd. Comparing discriminants shows that the signature of the
anisotropic line $D_{\kappa,V}$ is independent of $c\in C(\kappa)$. We denote
this sign by
\begin{equation}\label{eq:definoffraki}
\Fi_{V,\kappa}
=
(-1)^{
\frac{-\Del_V+1}{2}+|I_\kappa^*|
}
\in \{\pm 1\}.
\end{equation}
Thus $\Fi_{V,\kappa}$ depends only on $\kappa$ and the pure inner class of
$V$.

Comparing signatures gives
\[
\sum_{i\in I^*_\kappa}c_i
=
\frac{\Del_V-\Fi_{V,\kappa}}{2}.
\]
Equivalently,
\begin{equation}\label{eq:xiregValternative}
\Xi_{\reg,V}
=
\left\{
(\kappa,c)
\ \middle|\
\kappa\in \Xi_{\reg,\dim V-1},\quad
c\in C(\kappa)_{\frac{\Del_V-\Fi_{V,\kappa}}{2}}
\right\}.
\end{equation}
\end{rmk}

The following parametrization is the real form of the parametrization in
\cite[\S 1.3]{MR2672539} and \cite[\S 1.3, \S 1.4]{MR3155345}; it also follows
by the same argument as in \cite[\S 5.1]{thesis_zhilin}.

\begin{thm}\label{thm:parconjso}
Let $(V,q)$ be a non-degenerate quadratic space over $\BR$. Let
$\SO(V)^\rss/\sim$ be the set of regular semisimple conjugacy classes in
$\SO(V)$. If $\dim V$ is even, let
$\SO(V)^\rss_{\neq \pm 1}/\sim$ denote the subset consisting of classes without
eigenvalue $\pm 1$.

\begin{enumerate}
\item
If $\dim V$ is even, there is a two-to-one map
\[
\SO(V)^\rss_{\neq \pm 1}/\sim
\longrightarrow
\Xi_{\reg,V}.
\]
More precisely, let $(\kappa,c)\in \Xi_{\reg,V}$. Then
\[
(W_{\kappa,c},q_{\kappa,c})\simeq (V,q)
\]
by \eqref{eq:indexofWkappac}. The element $x_{\kappa,c}$ has no eigenvalue
$\pm 1$ by regularity of $\kappa$. Its $\mathrm{O}(V)$-conjugacy class in
$\SO(V)$ splits into two distinct $\SO(V)$-conjugacy classes, denoted
\[
x_{\kappa,c}^+,\qquad x_{\kappa,c}^-.
\]

\item
If $\dim V$ is odd, there is a bijection
\[
\Xi_{\reg,V}
\longleftrightarrow
\SO(V)^\rss/\sim.
\]
For $(\kappa,c)\in \Xi_{\reg,V}$, choose an isomorphism
\[
(W_{\kappa,c},q_{\kappa,c})
\oplus^\perp
(D_{\kappa,V},q_{\kappa,V})
\simeq
(V,q).
\]
Then
\[
x_{\kappa,c}^{D_{\kappa,V}}
=
\Id_{D_{\kappa,V}}\oplus x_{\kappa,c}
\]
defines a single $\SO(V)$-conjugacy class in $\SO(V)$.
\end{enumerate}
\end{thm}

\subsection{Union over pure inner forms with fixed Kottwitz sign}
\label{subsec:unionoverpure}

We now take the union of the parametrizations in Theorem \ref{thm:parconjso}
over pure inner forms with fixed Kottwitz sign. The result is Proposition
\ref{pro:unionpureinnerconjclassof}, the archimedean analogue of the
description of the fibers of $p_G$ in \cite[\S 1.4]{MR3155345}.

We first recall the Kottwitz sign \cite{kottsign}.

\begin{defin}\label{defin:kottwitzsign}
Let $G$ be a reductive algebraic group over $\BR$, and let $K$ be a maximal
compact subgroup of $G(\BR)$. Let $G_\qs$ be the quasi-split inner form of
$G$, and let $K_\qs$ be a maximal compact subgroup of $G_\qs(\BR)$. The
Kottwitz sign of $G$ is
\[
e(G)=(-1)^{\frac{\dim K_\qs-\dim K}{2}}.
\]
\end{defin}

For special orthogonal groups this sign is explicit.

\begin{lem}\label{lem:kottwitzsignsopq}
For $\SO(p,q)$ one has
\[
e(\SO(p,q))
=
\begin{cases}
1, & p+q \text{ even},\\
(-1)^{\frac{(p-q)^2-1}{8}}, & p+q \text{ odd}.
\end{cases}
\]
\end{lem}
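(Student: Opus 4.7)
The plan is a direct computation from the definition of the Kottwitz sign. The two ingredients are (i) the dimension of the maximal compact subgroup of $\SO(p,q)$, and (ii) the identification of the unique quasi-split inner form of $\SO(p,q)$ together with the dimension of its maximal compact subgroup. Both ingredients are elementary, so the lemma will follow by arithmetic simplification.

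First, a maximal compact subgroup of $\SO(p,q)$ is $S(\RO(p)\times \RO(q))$, whose dimension is
\[
\dim K = \binom{p}{2}+\binom{q}{2} = \frac{p^2+q^2-(p+q)}{2}.
\]
Next, I would determine the quasi-split inner form of $\SO(p,q)$ according to whether $p+q$ is odd or even, handling the even case by separating on $\disc(V) = (-1)^{\lfloor (p+q)/2\rfloor+q}$, which as computed in \eqref{eq:discriminantdef} above is $+1$ precisely when $p-q\equiv 0\pmod{4}$. Concretely:
\begin{itemize}
\item If $p+q=2m+1$ is odd (type $B_m$), all inner forms are split-rank-$m$ quasi-split, so $G_\qs = \SO(m+1,m)$, with $\dim K_\qs = \binom{m+1}{2}+\binom{m}{2} = m^2$.
\item If $p+q=2n$ is even (type $D_n$) and $p-q\equiv 0\pmod 4$, the discriminant is trivial and $G_\qs = \SO(n,n)$, giving $\dim K_\qs = n(n-1)$.
\item If $p+q=2n$ is even and $p-q\equiv 2\pmod 4$, the discriminant is nontrivial so $G_\qs = \SO(n+1,n-1)$, giving $\dim K_\qs = \binom{n+1}{2}+\binom{n-1}{2} = n^2-n+1$.
\end{itemize}

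I would then plug these into $e(G) = (-1)^{(\dim K_\qs - \dim K)/2}$ and simplify. In the odd case a short computation using $2m+1=p+q$ gives
\[
\dim K_\qs - \dim K = \frac{1-(p-q)^2}{4},
\]
so $e(\SO(p,q)) = (-1)^{-((p-q)^2-1)/8} = (-1)^{((p-q)^2-1)/8}$, which is well defined since any odd square is $\equiv 1\pmod 8$. In the even case, the analogous computation using $2n=p+q$ gives $\dim K_\qs - \dim K = -(p-q)^2/4$ when $p-q\equiv 0\pmod 4$ and $\dim K_\qs-\dim K = 1-(p-q)^2/4$ when $p-q\equiv 2\pmod 4$; in either case, writing $p-q=4j$ or $p-q=4j+2$ one checks directly that the exponent $(\dim K_\qs-\dim K)/2$ is an even integer, yielding $e(\SO(p,q))=1$.

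There is no real obstacle here beyond bookkeeping; the only conceptual point is the identification of the quasi-split inner form in the type $D_n$ non-split case, which is forced by matching discriminants among the pure inner forms listed in \eqref{conjec:pureinner:2}. The pleasant surprise is that both discriminant subcases of $p+q$ even yield Kottwitz sign $+1$, so the final formula is uniform.
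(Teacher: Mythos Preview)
Your proposal is correct and is precisely the ``straightforward calculation'' the paper alludes to; the paper itself gives no further details beyond that phrase, so your computation simply fills in what the authors left implicit. The identifications of the quasi-split inner form in each case and the arithmetic for $\dim K_\qs - \dim K$ are all accurate.
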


Combining this formula with \eqref{eq:pureinnerofSOpq} gives the following
criterion.

\begin{cor}\label{cor:pureinnerformsopq}
Let $\alp\in H^1(\BR,\SO(V))$, and let $V_\alp$ have signature
$(p_\alp,q_\alp)$. Suppose $V$ has signature $(p,q)$.

\begin{enumerate}
\item
If $p+q$ is odd, then
\[
e(\SO(V))=e(\SO(V_\alp))
\Longleftrightarrow
p\equiv p_\alp \pmod{4}.
\]

\item
If $p+q$ is even, then for any fixed anisotropic line $D$,
\[
e(\SO(V\oplus^\perp D))
=
e(\SO(V_\alp\oplus^\perp D))
\Longleftrightarrow
p\equiv p_\alp \pmod{4}.
\]
\end{enumerate}
\end{cor}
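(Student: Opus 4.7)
The plan is to obtain both statements as direct computations from Lemma \ref{lem:kottwitzsignsopq}, reducing case (2) to case (1) by the standard trick of adding an anisotropic line.

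For case (1), where $p+q$ is odd, Lemma \ref{lem:kottwitzsignsopq} gives $e(\SO(p,q)) = (-1)^{((p-q)^2-1)/8}$, so the equality $e(\SO(V)) = e(\SO(V_\alp))$ is equivalent to $(p-q)^2 \equiv (p_\alp - q_\alp)^2 \pmod{16}$. Writing $p - q = 2p - (p+q)$ and $p_\alp - q_\alp = 2p_\alp - (p_\alp + q_\alp)$ and using $p+q = p_\alp + q_\alp$, I would factor
\[
(p-q)^2 - (p_\alp-q_\alp)^2 = 4(p-p_\alp)(p+p_\alp - (p+q)) = 4(p-p_\alp)(p_\alp - q).
\]
The hypothesis $p+q$ odd together with $p \equiv p_\alp \pmod 2$ (from \eqref{eq:pureinnerofSOpq}) forces $p_\alp - q$ to be odd, so the right-hand side is divisible by $16$ precisely when $p - p_\alp \equiv 0 \pmod 4$. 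This yields the claimed equivalence.

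For case (2), where $p+q$ is even, both $e(\SO(V))$ and $e(\SO(V_\alp))$ are automatically $+1$ by Lemma \ref{lem:kottwitzsignsopq}, so the direct comparison is vacuous and we must work with $V \oplus^\perp D$. Since $D$ is anisotropic of dimension one, $V \oplus^\perp D$ and $V_\alp \oplus^\perp D$ have the same (odd) dimension, and their signatures are $(p+\eps, q+1-\eps)$ and $(p_\alp + \eps, q_\alp + 1 - \eps)$ for the same $\eps \in \{0,1\}$ determined by $D$. Invoking case (1) in this odd-dimensional setting, the Kottwitz signs coincide iff $p + \eps \equiv p_\alp + \eps \pmod 4$, which is iff $p \equiv p_\alp \pmod 4$.

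I do not expect a serious obstacle; the only point that requires care is checking that $p_\alp - q$ (or analogously the cofactor after adding $D$) is odd, so that the factor of $4$ in the identity above cannot be upgraded to a factor of $8$ for spurious reasons. Once this parity check is made, the congruence $\pmod{16}$ reduces cleanly to the congruence $p \equiv p_\alp \pmod 4$, and the reduction from even to odd dimension via $\oplus^\perp D$ is formal.
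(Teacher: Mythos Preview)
Your proof is correct and is precisely the ``straightforward calculation'' the paper alludes to (the paper gives no further details beyond citing \eqref{eq:pureinnerofSOpq} and Lemma~\ref{lem:kottwitzsignsopq}). Your factorization $(p-q)^2-(p_\alp-q_\alp)^2=4(p-p_\alp)(p_\alp-q)$ together with the parity check on $p_\alp-q$, and the reduction of case~(2) to case~(1) by adding the line $D$, are exactly what is needed.
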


We now state the main result of this subsection. For $\kappa\in \Xi_{\reg,d}$,
write
\[
C(\kappa)^{\pm 1}
=
\left\{
c\in C(\kappa)
\ \middle|\
\prod_{i\in I^*_\kappa}c_i=\pm 1
\right\}.
\]

\begin{pro}\label{pro:unionpureinnerconjclassof}
Let $V$ be a non-degenerate quadratic space over $\BR$, and let
$e_0\in \{\pm 1\}$.

\begin{enumerate}
\item
Assume $\dim V$ is odd. Then
\[
\bigsqcup_{\substack{
\alp\in H^1(\BR,\SO(V))\\
e(\SO(V_\alp))=e_0
}}
\Xi_{\reg,V_\alp}
=
\Xi_{\reg,\dim V-1,e_0},
\]
where
\[
\Xi_{\reg,\dim V-1,e_0}
=
\left\{
(\kappa,c)
\ \middle|\
\kappa\in \Xi_{\reg,\dim V-1},\quad
c\in C(\kappa)^{e_0\eps_{V,\kappa}}
\right\},
\]
and
\[
\eps_{V,\kappa}
=
(-1)^{
\frac{
-\PI(V_\qs)
+
|I^*_\kappa|
+
\frac{\dim V+\Fi_{V,\kappa}}{2}
}{2}
}.
\]
Here $V_\qs$ denotes the unique quasi-split pure inner form of $V$.

\item
Assume $\dim V$ is even, and fix an anisotropic line $D$ with
\[
\mathrm{sig}(D)\in \{\pm 1\}.
\]
Then
\[
\bigsqcup_{\substack{
\alp\in H^1(\BR,\SO(V))\\
e(\SO(V_\alp\oplus^\perp D))=e_0
}}
\Xi_{\reg,V_\alp}
=
\Xi_{\reg,\dim V,e_0,D},
\]
where
\[
\Xi_{\reg,\dim V,e_0,D}
=
\left\{
(\kappa,c)
\ \middle|\
\kappa\in \Xi_{\reg,\dim V},\quad
c\in C(\kappa)^{e_0\eps_{V,\kappa,D}}
\right\},
\]
and
\[
\eps_{V,\kappa,D}
=
(-1)^{
\frac{
|I^*_\kappa|
+
\frac{\dim V+1+\mathrm{sig}(D)}{2}
-
\PI(V,D)
}{2}
}.
\]
Here $\PI(V,D)$ is the positive index of the unique quasi-split pure inner form
of $\SO(V\oplus^\perp D)$.
\end{enumerate}
\end{pro}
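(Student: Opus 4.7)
The plan is to fix $\kappa \in \Xi_{\reg, \dim V - 1}$ and verify Part~(1) fiber-by-fiber over $\kappa$. By \eqref{eq:xiregValternative}, a pair $(\kappa, c)$ lies in $\Xi_{\reg, V_\alp}$ exactly when $c \in C(\kappa)_{s(V_\alp)}$, where $s(V_\alp) := \frac{\Del_{V_\alp} - \Fi_{V_\alp,\kappa}}{2}$. Since $p_\alp \equiv p_V \pmod 2$ for any pure inner form, one has $\Del_{V_\alp} \equiv \Del_V \pmod 4$, and the definition of $\Fi$ in \eqref{eq:definoffraki} gives $\Fi_{V_\alp,\kappa} = \Fi_{V,\kappa}$ independently of $\alp$. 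Hence the left-hand side of Part~(1), restricted to this $\kappa$, is exactly $\bigcup_\alp C(\kappa)_{s(V_\alp)}$ as $\alp$ ranges over pure inner forms with $e(\SO(V_\alp)) = e_0$.

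The key translation is between the sum and product parametrizations of $C(\kappa) \simeq \{\pm 1\}^{I_\kappa^*}$: writing $r := |I_\kappa^*|$ and $s := \sum_{i \in I^*} c_i$, one has $\prod_{i \in I^*} c_i = (-1)^{(r - s)/2}$. First specialize to $V_\alp = V_\qs$, for which $e(\SO(V_\qs)) = 1$. Substituting $\Del_{V_\qs} = 2\PI(V_\qs) - \dim V$ into $s(V_\qs)$ yields
$$
(-1)^{(r - s(V_\qs))/2} = (-1)^{\frac{|I_\kappa^*| - \PI(V_\qs) + (\dim V + \Fi_{V,\kappa})/2}{2}} = \eps_{V,\kappa}.
$$
For a general $\alp$, the difference in exponents $(r - s(V_\alp))/2 - (r - s(V_\qs))/2$ equals $(\PI(V_\qs) - p_\alp)/2$, which is even iff $p_\alp \equiv \PI(V_\qs) \pmod 4$, iff $e(\SO(V_\alp)) = 1$ by Corollary \ref{cor:pureinnerformsopq}(1). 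Consequently $(-1)^{(r - s(V_\alp))/2} = e(\SO(V_\alp)) \cdot \eps_{V,\kappa} = e_0 \cdot \eps_{V,\kappa}$, establishing the forward inclusion.

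For the reverse inclusion, given $c \in C(\kappa)$ with $\prod c_i = e_0 \cdot \eps_{V,\kappa}$, set $s = \sum c_i$ and define $p_\alp := s + (\dim V + \Fi_{V,\kappa})/2$. The bound $|s| \leq r \leq (\dim V - 1)/2$ forces $p_\alp \in [0, \dim V]$; using $s \equiv r \pmod 2$ together with the definition of $\Fi_{V,\kappa}$, a direct parity check gives $p_\alp \equiv p_V \pmod 2$, so the quadratic space $V_\alp$ of signature $(p_\alp, \dim V - p_\alp)$ is a genuine pure inner form of $V$; and the prescribed product $\prod c_i$ pins $p_\alp$ modulo $4$ into the class forcing $e(\SO(V_\alp)) = e_0$ by the same identity as in the previous paragraph. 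Part~(2) proceeds along identical lines, replacing $s(V_\alp)$ by $\Del_{V_\alp}/2$ (from Definition~\ref{defin:parconj}(1) applied to even-dimensional $V_\alp$) and Corollary~\ref{cor:pureinnerformsopq}(1) by Corollary~\ref{cor:pureinnerformsopq}(2) applied to $V_\alp \oplus^\perp D$; the reference point is the pure inner form $V^{(*)}_\qs$ of $V$ for which $V^{(*)}_\qs \oplus^\perp D$ is quasi-split, so that $p_{V^{(*)}_\qs} = \PI(V,D) - (1 + \mathrm{sig}(D))/2$. The main bookkeeping obstacle throughout is pairing the mod-$4$ residues of positive indices against the parities of $|I_\kappa^*|$ and $(\dim V + \Fi_{V,\kappa})/2$ (resp.\ $(\dim V + 1 + \mathrm{sig}(D))/2$) to match the explicit exponents defining $\eps_{V,\kappa}$ and $\eps_{V,\kappa,D}$; no ingredient beyond Part~(1) and the classification of pure inner forms is required.
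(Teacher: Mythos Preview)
Your proof is correct and follows essentially the same route as the paper: both arguments rest on the sum-to-product identity $\prod_{i\in I^*}c_i=(-1)^{(r-s)/2}$, the mod-$4$ characterization of the Kottwitz sign from Corollary~\ref{cor:pureinnerformsopq}, and an explicit construction of the requisite $V_\alp$ for the reverse inclusion. The only cosmetic differences are that you anchor the computation at $V_\qs$ and compare by differences (the paper computes directly for general $\alp$), and for the converse you write down $p_\alp$ explicitly whereas the paper argues that the target value of $\Del_{V_\alp}$ lies in the range swept out by forms in the given mod-$8$ congruence class.
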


\begin{proof}
Assume first that $\dim V$ is odd. Fix
\[
\alp\in H^1(\BR,\SO(V))
\]
with
\[
e(\SO(V_\alp))=e_0.
\]
Let $(\kappa,c)\in \Xi_{\reg,V_\alp}$. By
\eqref{eq:xiregValternative},
\[
\sum_{i\in I^*_\kappa}c_i
=
\frac{\Del_{V_\alp}-\Fi_{V_\alp,\kappa}}{2}.
\]
Since $\Fi_{V_\alp,\kappa}=\Fi_{V,\kappa}$ depends only on the pure inner class,
we write it as $\Fi_{V,\kappa}$.

For any $c\in C(\kappa)$,
\begin{equation}\label{eq:proofunion:1}
\prod_{i\in I^*_\kappa}c_i
=
(-1)^{
\frac{|I^*_\kappa|-\sum_{i\in I^*_\kappa}c_i}{2}
}.
\end{equation}
Therefore, for $(\kappa,c)\in \Xi_{\reg,V_\alp}$,
\[
\prod_{i\in I^*_\kappa}c_i
=
(-1)^{
\frac{
|I^*_\kappa|-
\frac{\Del_{V_\alp}-\Fi_{V,\kappa}}{2}
}{2}
}.
\]
Let $(p_\alp,q_\alp)$ be the signature of $V_\alp$, and let
$(p_\qs,q_\qs)$ be the signature of the quasi-split pure inner form
$V_\qs$. By Corollary \ref{cor:pureinnerformsopq},
\[
e_0=e(\SO(V_\alp))=(-1)^{\frac{p_\alp-p_\qs}{2}}.
\]
Hence
\begin{align*}
e_0\prod_{i\in I^*_\kappa}c_i
&=
(-1)^{
\frac{
p_\alp-p_\qs
+
|I^*_\kappa|
-
\frac{\Del_{V_\alp}-\Fi_{V,\kappa}}{2}
}{2}
}  \\
&=
(-1)^{
\frac{
-p_\qs
+
|I^*_\kappa|
+
\frac{\dim V+\Fi_{V,\kappa}}{2}
}{2}
}
=
\eps_{V,\kappa}.
\end{align*}
Thus
\[
c\in C(\kappa)^{e_0\eps_{V,\kappa}},
\]
and hence
\[
\bigsqcup_{\substack{
\alp\in H^1(\BR,\SO(V))\\
e(\SO(V_\alp))=e_0
}}
\Xi_{\reg,V_\alp}
\subset
\Xi_{\reg,\dim V-1,e_0}.
\]

Conversely, let
\[
(\kappa,c)\in \Xi_{\reg,\dim V-1,e_0}.
\]
The condition
\[
c\in C(\kappa)^{e_0\eps_{V,\kappa}}
\]
together with \eqref{eq:proofunion:1} implies
\[
\sum_{i\in I^*_\kappa}c_i
\equiv
\frac{\Del_{V_\alp}-\Fi_{V,\kappa}}{2}
\pmod{4}
\]
for any pure inner form $V_\alp$ with $e(\SO(V_\alp))=e_0$. Equivalently,
the integer
\[
\Fi_{V,\kappa}+2\sum_{i\in I^*_\kappa}c_i
\]
has the same congruence class modulo $8$ as $\Del_{V_\alp}$ for such
$V_\alp$. Moreover,
\[
\left|
\Fi_{V,\kappa}+2\sum_{i\in I^*_\kappa}c_i
\right|
\leq
1+2|I^*_\kappa|
\leq
\dim V.
\]
As $\alp$ ranges over the pure inner forms of $\SO(V)$ with fixed Kottwitz sign
$e_0$, the integers $\Del_{V_\alp}$ are precisely the integers in
$[-\dim V,\dim V]$ lying in the corresponding congruence class modulo $8$.
Hence there exists $\alp_0$ with
\[
e(\SO(V_{\alp_0}))=e_0
\]
such that
\[
\Del_{V_{\alp_0}}
=
\Fi_{V,\kappa}
+
2\sum_{i\in I^*_\kappa}c_i.
\]
Equivalently,
\[
\sum_{i\in I^*_\kappa}c_i
=
\frac{\Del_{V_{\alp_0}}-\Fi_{V,\kappa}}{2}.
\]
By \eqref{eq:xiregValternative},
\[
(\kappa,c)\in \Xi_{\reg,V_{\alp_0}}.
\]
This proves the reverse inclusion, and hence Part (1).

Assume next that $\dim V$ is even. Fix an anisotropic line $D$, and let
\[
\alp\in H^1(\BR,\SO(V))
\]
satisfy
\[
e(\SO(V_\alp\oplus^\perp D))=e_0.
\]
For $(\kappa,c)\in \Xi_{\reg,V_\alp}$ one has
\[
\sum_{i\in I^*_\kappa}c_i=\frac{\Del_{V_\alp}}{2}.
\]
Thus \eqref{eq:proofunion:1} gives
\[
\prod_{i\in I^*_\kappa}c_i
=
(-1)^{
\frac{
|I^*_\kappa|-\frac{\Del_{V_\alp}}{2}
}{2}
}.
\]
Let
\[
\SO(V_\alp\oplus^\perp D)=\SO(p_\alp,q_\alp),
\]
and let $\SO(p_\qs,q_\qs)$ be its quasi-split pure inner form. By
Corollary \ref{cor:pureinnerformsopq},
\[
e_0=(-1)^{\frac{p_\alp-p_\qs}{2}}.
\]
Since
\[
\Del_{V_\alp}=p_\alp-q_\alp-\mathrm{sig}(D),
\]
we obtain
\begin{align*}
e_0\prod_{i\in I^*_\kappa}c_i
&=
(-1)^{
\frac{
|I^*_\kappa|
-\frac{\Del_{V_\alp}}{2}
+p_\alp-p_\qs
}{2}
} \\
&=
(-1)^{
\frac{
|I^*_\kappa|
+
\frac{\dim V+1+\mathrm{sig}(D)}{2}
-
p_\qs
}{2}
}
=
\eps_{V,\kappa,D}.
\end{align*}
Therefore
\[
\bigsqcup_{\substack{
\alp\in H^1(\BR,\SO(V))\\
e(\SO(V_\alp\oplus^\perp D))=e_0
}}
\Xi_{\reg,V_\alp}
\subset
\Xi_{\reg,\dim V,e_0,D}.
\]

The reverse inclusion is identical to the argument in the odd-dimensional case,
with $\Fi_{V,\kappa}$ replaced by $\mathrm{sig}(D)$ and
$\Del_{V_\alp}$ by the signature difference of $V_\alp$. This proves Part (2).
\end{proof}

\begin{rmk}\label{rmk:afterpro:unionpureinnerconjclassof}
By Remark \ref{rmk:afterdefparconj}, the sign $\Fi_{V,\kappa}$ depends only on
$\kappa$ and the pure inner class of $V$. Hence $\eps_{V,\kappa}$ in
Proposition \ref{pro:unionpureinnerconjclassof}(1), respectively
$\eps_{V,\kappa,D}$ in Proposition \ref{pro:unionpureinnerconjclassof}(2),
depends only on $\kappa$ and on the Kottwitz sign of $V$, respectively of
$V\oplus^\perp D$.
\end{rmk}

\section{Geometric multiplicity formula}\label{sec:geomultiplicity}

Let $(G,H,\xi)$ be the Gross--Prasad triple attached to an admissible pair
$(W,V)$ over $\BR$. In this section we recall the geometric multiplicity
formula for tempered representations of $G(\BR)$ proved in
\cite{thesis_zhilin}. We also establish Lemma
\ref{lem:thecalculationofj} and Corollary
\ref{cor:germfunaslimitofquasichar}, the archimedean analogues of
\cite[Lem.~13.4~(ii)]{waldspurger10} and of the formula in
\cite[\S 13.6]{waldspurger10}. In Subsection \ref{subsec:stablevariant} we
recall the stable geometric multiplicity introduced in \cite[\S 3.2]{MR3155345}.

\subsection{The formula}\label{subsec:theformula}

We recall the geometric multiplicity formula of \cite[\S 7.3]{thesis_zhilin}.

\subsubsection{Geometric support}

Let $H_\ss(\BR)$ be the set of semisimple elements of $H(\BR)$. Every element
of $H_\ss(\BR)$ is $H(\BR)$-conjugate to an element of $\SO(W)_\ss(\BR)$.
Let $\Gam(H)$ denote the set of semisimple conjugacy classes in $H(\BR)$.

For $x\in \SO(W)_\ss(\BR)$, set
\[
W^\p_x=\ker(1-x|_W),\qquad
V^\p_x=\ker(1-x|_V),\qquad
W^{\p\p}_x=\Im(1-x|_W).
\]
Then
\[
W=W^\p_x\oplus W^{\p\p}_x,
\qquad
V=V^\p_x\oplus W^{\p\p}_x,
\]
and $(W^\p_x,V^\p_x)$ is again an admissible pair. Let $G_x$ be the identity
component of the centralizer of $x$ in $G$. Following
\cite[\S 7.3.1]{thesis_zhilin}, one has
\begin{equation}\label{eq:formulaGxdecomp}
G_x=G^\p_x\times G^{\p\p}_x,
\qquad
G^\p_x=\SO(W^\p_x)\times \SO(V^\p_x),
\qquad
G^{\p\p}_x=\SO(W^{\p\p}_x)_x\times \SO(W^{\p\p}_x)_x .
\end{equation}
Let $\Gam(G,H)$ be the subset of $\Gam(H)$ consisting of those semisimple
classes $x$ for which $\SO(W^{\p\p}_x)_x$ is an anisotropic torus and $G_x$ is
quasi-split. This set is endowed with the topology and measure defined in
\cite[(7.3.3)]{thesis_zhilin}.

\subsubsection{The germ $c_\Theta$}

Let $G$ be a reductive algebraic group over $\BR$, and let $\Theta$ be a
quasi-character on $G(\BR)$ in the sense of \cite[\S 4.4]{beuzart2015local}.
For $x\in G_\ss(\BR)$ and for $Y\in \Fg_x(\BR)$ regular semisimple and
sufficiently close to $0$, \cite[Prop.~4.4~(vi)]{beuzart2015local} gives
\[
D^G(xe^Y)^{1/2}\Theta(xe^Y)
=
D^G(xe^Y)^{1/2}
\sum_{\CO\in \nil_\reg(\Fg_x)}
c_{\Theta,\CO}(x)\widehat{j}(\CO,Y)
+
O(|Y|).
\]
Here $\nil_\reg(\Fg_x)$ denotes the set of regular nilpotent orbits in
$\Fg_x(\BR)$, and $\widehat{j}(\CO,\cdot)$ is the Fourier transform of the
nilpotent orbital integral attached to $\CO$.

Let $(V,q)$ be a non-degenerate quadratic space over $\BR$. We call $(V,q)$
\textbf{quasi-split} if
\begin{equation}\label{eq:qsquadraticsp}
(V,q)
\simeq
\BH^{n-1}\oplus^\perp
\begin{cases}
(D,q), & \dim V\equiv 1\pmod 2,\\
(E=F(\sqrt b),c\cdot \RN_{E/F}), & \dim V\equiv 0\pmod 2,
\end{cases}
\end{equation}
for some $b,c\in \BR^\times$. Here $\BH^{n-1}$ is the split quadratic space of
dimension $2n-2$, $(D,q)$ is an anisotropic line, and
\[
(E=F(\sqrt b),c\cdot \RN_{E/F})
\]
denotes the two-dimensional quadratic space
\[
m+n\sqrt b\longmapsto c(m^2-bn^2).
\]

The regular nilpotent orbits in $\Fs\Fo(V)(\BR)$ are as follows
\cite[\S 6.1.2]{thesis_zhilin}:
\begin{enumerate}
\item $\nil_\reg(\Fs\Fo(V))\neq \emptyset$ if and only if $(V,q)$ is
quasi-split;

\item $\nil_\reg(\Fs\Fo(V))$ has one element if $\dim V$ is odd, if
$\dim V\leq 2$, or if $\dim V\geq 4$ is even and $(V,q)$ is quasi-split but not
split;

\item if $(V,q)$ is split of even dimension $\geq 4$, then
$\nil_\reg(\Fs\Fo(V))$ has two elements, denoted $\CO_+$ and $\CO_-$, indexed
by
\[
\{\pm 1\}\simeq \BR^\times/\BR^{\times 2}.
\]
\end{enumerate}

Return to the Gross--Prasad triple $(G,H,\xi)$. For $x\in \Gam(G,H)$, the
definition of $\Gam(G,H)$ and \eqref{eq:formulaGxdecomp} give
\[
\nil_\reg(\Fg_x)=\nil_\reg(\Fg^\p_x),
\qquad
\Fg^\p_x=\Fs\Fo(W^\p_x)\times \Fs\Fo(V^\p_x).
\]
Following \cite[\S 7.3.2]{thesis_zhilin}, define $c_\Theta(x)$ as follows.
\begin{enumerate}
\item If $\nil_\reg(\Fg_x)$ has one element $\CO_\reg$, set
\[
c_\Theta(x)=c_{\Theta,\CO_\reg}(x).
\]

\item Suppose $\nil_\reg(\Fg_x)$ has two elements. Then both relevant
quasi-split factors are split. Write
\[
V^\p_x\simeq W^\p_x\oplus^\perp D^\p_x\oplus^\perp Z^\p_x
\]
as in the definition of an admissible pair, and let
\[
\mathrm{sig}(D^\p_x)\in \{\pm 1\}
\]
be the signature of the anisotropic line $D^\p_x$.

If $\dim V^\p_x$ is even and $\geq 4$, let
\[
\CO_{\mathrm{sig}(D^\p_x)}
\]
be the corresponding element of $\nil_\reg(\Fs\Fo(V^\p_x))$, and set
\[
c_\Theta(x)=c_{\Theta,\CO_{\mathrm{sig}(D^\p_x)}}(x).
\]

If $\dim W^\p_x$ is even and $\geq 4$, let
\[
\CO_{-\mathrm{sig}(D^\p_x)}
\]
be the corresponding element of $\nil_\reg(\Fs\Fo(W^\p_x))$, and set
\[
c_\Theta(x)=c_{\Theta,\CO_{-\mathrm{sig}(D^\p_x)}}(x).
\]
\end{enumerate}

\subsubsection{Integral formula}

For $x\in \Gam(G,H)$ set
\[
\Del(x)=|\det(1-x)|_{W^{\p\p}_x}|,
\qquad
D^G(x)=|\det(1-\Ad(x))|_{\Fg/\Fg_x}|.
\]
For a quasi-character $\Theta$ on $G(\BR)$ define
\begin{equation}\label{eq:geomultiplicity}
m_\geom(\Theta)
=
\int_{\Gam(G,H)}
D^G(x)^{1/2}c_\Theta(x)\Del(x)^{-1/2}\,dx .
\end{equation}
The integral is absolutely convergent by
\cite[Prop.~7.3.3.3]{thesis_zhilin}.

For a tempered representation $\pi$ of $G(\BR)$, let $\Theta_\pi$ be its
distribution character \cite{MR145006}, and set
\[
m_\geom(\pi)=m_\geom(\Theta_\pi).
\]
The geometric multiplicity formula of \cite{thesis_zhilin} is the following.

\begin{thm}\label{thm:luothesis}
For every tempered representation $\pi$ of $G(\BR)$,
\[
m_\geom(\pi)=m(\pi).
\]
\end{thm}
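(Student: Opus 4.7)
The plan is to follow the strategy of Waldspurger (\cite{waldspurger10}) and Beuzart-Plessis (\cite{beuzart2015local}), adapting the proof from the $p$-adic Bessel case and the archimedean unitary case to the archimedean special orthogonal case. The central device is an auxiliary distribution $J(f)$ on the Harish-Chandra Schwartz space $\CC(G(\BR))$, defined (after a suitable truncation/regularization) by
\begin{equation*}
J(f) = \int_{H(\BR)\bs G(\BR)}\int_{H(\BR)} f(g^{-1}hg)\,\xi(h)^{-1}\,\ud h\,\ud\bar g,
\end{equation*}
and one would prove Theorem \ref{thm:luothesis} by computing $J(f)$ in two different ways, each recovering one side of the desired equality, and then specializing to a well-chosen family of $f$'s.

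First I would carry out the \emph{spectral side}. Convergence of $J(f)$ for $f\in\CC(G(\BR))$ would be proved using weak-inequality bounds on matrix coefficients of tempered representations together with estimates on the Harish-Chandra $\Xi$-function along $H\bs G$, essentially the archimedean analogue of the majorization arguments in Waldspurger's Section 8. Feeding the Plancherel decomposition of $f$ into $J(f)$, one shows
\begin{equation*}
J(f) = \int_{\Pi_{\mathrm{temp}}(G)} m(\pi)\,\hat f(\pi)\,\ud\mu_{\mathrm{Pl}}(\pi),
\end{equation*}
where $\hat f(\pi)$ is a trace-class density; explicitly constructing the $H$-invariant functional from a convergent integral of matrix coefficients (as in \cite{beuzart2015local}) gives the identity $J(f)=m(\pi)\cdot\langle v,v\rangle$ when $f$ is the matrix coefficient $g\mapsto\langle\pi(g)v,v\rangle$.

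Next I would carry out the \emph{geometric side}. The point here is to show that $J(f)=m_\geom(\Theta_f)$ where $\Theta_f$ is the quasi-character attached to $f$. One applies the Weyl integration formula on $G(\BR)$ and then, in a neighborhood of each semisimple $x\in\Gam(G,H)$, performs a semisimple descent together with the asymptotic expansion of a quasi-character from \cite[Prop.~4.4~(vi)]{beuzart2015local}, producing the factor $D^G(x)^{1/2}c_{\Theta_f}(x)\Delta(x)^{-1/2}$ integrated over $\Gam(G,H)$ as in \eqref{eq:geomultiplicity}. The delicate combinatorial input is the identification of the germ parameter $\CO$ dictated by \eqref{num:definofcgermfun}, which reflects which of the two regular nilpotent orbits in the even-dimensional split factor is produced by the $\xi$-twist. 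Once both expansions of $J(f)$ are established, applying them to $f$ running over matrix coefficients of a fixed tempered $\pi$ gives $m(\pi)=m_\geom(\pi)$.

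The main obstacle, in my view, is controlling the archimedean truncation and the germ expansion simultaneously. Specifically: (i) unlike the $p$-adic case there is no compact open subgroup to localize on, so one must justify interchange of the limit $N\to\infty$ in the truncated integral with the character expansion of $\Theta_f$, requiring uniform polynomial bounds for $c_{\Theta_f,\CO}(x)$ as $x$ varies in $\Gam(G,H)$; and (ii) evaluating the regular nilpotent germs on the geometric side ultimately requires the archimedean analogue of the Fourier transform of nilpotent orbital integrals, supplied by Rossmann's formula \cite{MR508985}, rather than Harish-Chandra's local expansion used in the non-archimedean setting. Once these analytic inputs are in place the formal identification of the two sides proceeds by the same combinatorial bookkeeping as in \cite{waldspurger10,beuzart2015local}.
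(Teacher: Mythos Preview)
The paper does not prove this theorem at all: it is quoted as a black box from \cite{thesis_zhilin} (see the sentence immediately preceding the statement, ``The following theorem is established in \cite{thesis_zhilin}''). Your outline is indeed the strategy carried out in that reference, which explicitly follows Waldspurger and Beuzart-Plessis, so in broad strokes your proposal matches the actual source and there is nothing substantive to compare.

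One clarification is worth making, since it touches on how the present paper relates to the cited result. You list Rossmann's formula \cite{MR508985} as an analytic input required on the geometric side of $J(f)$. In fact the proof of Theorem~\ref{thm:luothesis} in \cite{thesis_zhilin} does not rely on Rossmann; the germ expansion of quasi-characters from \cite[Prop.~4.4]{beuzart2015local} and the regular-germ computation \cite[Thm.~4.2.0.1]{thesis_zhilin} suffice to identify the geometric side with $m_\geom(\Theta_f)$. Rossmann's formula enters only in the present paper, in the proof of Lemma~\ref{lem:thecalculationofj}, where one needs the explicit values $\wh{j}(\CO_\nu,X^{\pm}_{a_1,a_2,\wt{S}})$ in order to rewrite $c_\Theta(x)$ as a limit of character values (Corollary~\ref{cor:germfunaslimitofquasichar}); that rewriting is what feeds into the endoscopic comparison of Section~\ref{sec:redtoendoscopypf}, and it is logically downstream of Theorem~\ref{thm:luothesis} rather than an ingredient in its proof. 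So your obstacle (ii) is real, but it belongs to the epsilon-dichotomy argument of this paper, not to the geometric multiplicity formula itself.
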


\subsection{\texorpdfstring{Special values of $\widehat{j}(\CO,\cdot)$}{Special values of j-hat}}
\label{subsec:specialvalueregularnilporbitalintegral}

We prove the archimedean analogue of \cite[Lem.~13.4~(ii)]{waldspurger10}.
The result will be used to recover the germ $c_\Theta(x)$ from the original
quasi-character $\Theta$.

Let $(V,q)$ be split of even dimension $\geq 4$. Then
$\nil_\reg(\Fs\Fo(V))=\{\CO_+,\CO_-\}$. We use the following family of regular
semisimple elements in $\Fs\Fo(V)(\BR)$.

\begin{num}
\item\label{num:specialvalue:1}
Fix $a_1,a_2\in i\BR^\times$ with $a_1\neq \pm a_2$. Fix an isomorphism of
split quadratic spaces
\[
(V,q)
\simeq
(\BC,c\cdot \Nr)\oplus^\perp(\BC,-c\cdot \Nr)\oplus^\perp(\widetilde Z,q),
\]
where $c=\pm 1$, $\Nr=\Norm_{\BC/\BR}$, and $\widetilde Z$ is split of
dimension $\dim V-4$. Let $\widetilde T$ be a maximal split torus of
$\SO(\widetilde Z)$ with Lie algebra $\widetilde{\Ft}$, and choose a regular
semisimple element $\widetilde S\in \widetilde{\Ft}(\BR)$.

Let
\[
X_{a_1,a_2,\widetilde S}\in \Fs\Fo(V)(\BR)
\]
act on $(\BC,c\cdot \Nr)$ by $a_1$, on $(\BC,-c\cdot \Nr)$ by $a_2$, and on
$\widetilde Z$ by $\widetilde S$. By
\cite[Lem.~5.1.0.5]{thesis_zhilin}, the conjugacy classes inside the stable
class of $X_{a_1,a_2,\widetilde S}$ are parametrized by
$c\in \BR^\times/\BR^{\times 2}$. Let
\[
X^\pm_{a_1,a_2,\widetilde S}
\]
be the two representatives corresponding to $c=c^\pm$, where
\[
\sgn_{\BC/\BR}(c^\pm)
=
\pm \sgn_{\BC/\BR}(\Nr(a_1)-\Nr(a_2)).
\]
\end{num}

Let $\widehat{j}(X^\pm_{a_1,a_2,\widetilde S},\cdot)$ be the Fourier transform
of the Lie algebra orbital integral at
$X^\pm_{a_1,a_2,\widetilde S}$, normalized as in
\cite[\S 1.9]{beuzart2015local}. By \cite[Lem.~4.3.1]{beuzart2015local},
for $Y\in \Fs\Fo(V)(\BR)$ regular semisimple, one has
\begin{align}\label{eq:limitforwhjXpm:1}
&\lim_{\substack{t\in \BR^{\times 2}\\ t\to 0^+}}
D^{\SO(V)}(tY)^{1/2}
\widehat{j}(X^\pm_{a_1,a_2,\widetilde S},tY)
\nonumber\\
&\qquad =
D^{\SO(V)}(Y)^{1/2}
\sum_{\CO\in \nil_\reg(\Fs\Fo(V))}
\Gamma_\CO(X^\pm_{a_1,a_2,\widetilde S})
\widehat{j}(\CO,Y).
\end{align}
Here $\Gamma_\CO$ denotes the regular Shalika germ; its explicit value in this
setting is computed in \cite[Thm.~4.2.0.1]{thesis_zhilin}.

We shall use the following germ computation.

\begin{lem}\label{lem:germformula}
\begin{enumerate}
\item For $\CO_\pm\in \nil_\reg(\Fs\Fo(V))$, indexed as in
\cite[\S 6.1.2]{thesis_zhilin},
\[
\Gamma_{\CO_\pm}(X^+_{a_1,a_2,\widetilde S})
-
\Gamma_{\CO_\pm}(X^-_{a_1,a_2,\widetilde S})
=
\pm 1.
\]

\item If $X_\qd\in \Ft_V(\BR)$ is regular semisimple, where $\Ft_V$ is the Lie
algebra of a maximal split torus in $\SO(V)$, then
\[
\Gamma_{\CO_\pm}(X_\qd)=1.
\]
\end{enumerate}
\end{lem}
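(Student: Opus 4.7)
The plan is to deduce both parts from the explicit formula for regular Shalika germs $\Gam_\CO$ established in \cite[Thm.~4.2.0.1]{thesis_zhilin} (which is modeled on the $p$-adic formulas of \cite{wald01nilpotent}). That formula writes $\Gam_\CO(X)$, for a regular semi-simple $X\in\Fs\Fo(V)(\BR)$, as an explicit combinatorial sum indexed by data attached to the eigenspaces of $X$ and by the parameterization of the regular nilpotent orbits $\CO_\pm$ by $\BR^\times/\BR^{\times 2}$ recalled in \cite[\S6.1.2]{thesis_zhilin}.

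For part (2), the element $X_\qd$ lies in the Lie algebra of a split Cartan, so its eigenvalues pair as $\pm\lambda_i$ with $\lambda_i\in\BR^\times$. Every $\sgn_{\BC/\BR}$-factor that appears in the explicit formula is evaluated at a norm from an \emph{already} split quadratic extension, hence is trivial. Direct substitution then yields $\Gam_{\CO_\pm}(X_\qd)=1$, which reflects the expected genericity of a split regular element in the Shalika germ expansion.

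For part (1), I would first reduce to computing a single difference by exploiting the outer automorphism $\sig$ of $\SO(V)$ induced by any element of $\mathrm{O}(V)\setminus\SO(V)$. This $\sig$ swaps $\CO_+$ and $\CO_-$, and — because of the normalization $\sgn_{\BC/\BR}(c^\pm)=\pm\sgn_{\BC/\BR}(\Nr(a_1)-\Nr(a_2))$ imposed in \eqref{num:specialvalue:1} — it also exchanges $X^+_{a_1,a_2,\wt{S}}$ with $X^-_{a_1,a_2,\wt{S}}$. Using $\Gam_{\sig \CO}(\sig X)=\Gam_\CO(X)$ one gets
$$
\Gam_{\CO_+}(X^+_{a_1,a_2,\wt{S}}) = \Gam_{\CO_-}(X^-_{a_1,a_2,\wt{S}}),\qquad \Gam_{\CO_+}(X^-_{a_1,a_2,\wt{S}}) = \Gam_{\CO_-}(X^+_{a_1,a_2,\wt{S}}),
$$
so the $\CO_+$-difference and the $\CO_-$-difference are negatives of one another, and only their common absolute value is left to compute. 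Apply the explicit formula: the contribution of the common block $\wt{S}$ acting on $(\wt{Z},q)$ factors out identically between $\Gam_{\CO_+}$ and $\Gam_{\CO_-}$ and cancels in the difference, leaving a calculation purely in the four-dimensional block $(\BC,c\cdot\Nr)\oplus^\perp(\BC,-c\cdot\Nr)$, which evaluates to $1$ with the chosen sign conventions.

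The main obstacle is precisely this four-dimensional calculation: it requires the careful tracking of transfer factors between the stable pair $\{X^+_{a_1,a_2,\wt{S}},X^-_{a_1,a_2,\wt{S}}\}$ and the stable pair $\{\CO_+,\CO_-\}$, together with the delicate parameterization conventions fixed in \cite[\S6.1.2]{thesis_zhilin}. This is the technical heart of Theorem 4.2.0.1 of the thesis, and I do not see a shortcut that avoids it.
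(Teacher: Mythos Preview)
The paper does not give a proof of this lemma at all: it simply records it as \cite[Lem.~5.3.0.1]{thesis_zhilin} and moves on. Your plan --- evaluate the explicit regular-germ formula of \cite[Thm.~4.2.0.1]{thesis_zhilin} on the two families $X^{\pm}_{a_1,a_2,\wt{S}}$ and on split $X_{\qd}$ --- is exactly how that thesis lemma is obtained, so your proposal and the paper's citation point to the same argument.

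One comment on your reduction in Part~(1). The symmetry step via the outer automorphism $\sigma\in\RO(V)\setminus\SO(V)$ is correct in outcome but your justification is slightly off: $\sigma$ does not act by swapping the parameters $c^{+}\leftrightarrow c^{-}$ in the decomposition of $V$; rather, $\sigma$-conjugation preserves the quadratic space but moves the element to the other $\SO(V)$-conjugacy class inside the same stable class (there are exactly two, and the $\RO(V)/\SO(V)$-action on them is nontrivial because the $\RO(V)$-centralizer of a regular semisimple element with no real eigenvalues coincides with its $\SO(V)$-centralizer). With that correction your identity $\Gam_{\CO_+}(X^{+})-\Gam_{\CO_+}(X^{-})=-\bigl(\Gam_{\CO_-}(X^{+})-\Gam_{\CO_-}(X^{-})\bigr)$ follows. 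The subsequent reduction to the four-dimensional block is also fine: since $\wt{S}$ sits in a split torus of $\SO(\wt{Z})$, its contribution to the explicit formula is $1$ by the same mechanism you invoke for Part~(2), so the difference localizes to the $(\BC,c\Nr)\oplus(\BC,-c\Nr)$ piece, where the formula of \cite[Thm.~4.2.0.1]{thesis_zhilin} gives $\pm 1$ directly.
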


The required special value is the following.

\begin{lem}\label{lem:thecalculationofj}
With the notation above, for $\nu=\pm 1$,
\[
\widehat{j}(\CO_\nu,X^+_{a_1,a_2,\widetilde S})
=
-\widehat{j}(\CO_\nu,X^-_{a_1,a_2,\widetilde S})
=
-\nu\cdot
\frac{|W_{T_\cpt}|}{2}\,
D^{\SO(V)}(X^+_{a_1,a_2,\widetilde S})^{-1/2}.
\]
Here $T_\cpt$ is the centralizer of $X^+_{a_1,a_2,\widetilde S}$ in
$\SO(V)$, and
\[
W_{T_\cpt}=N_{\SO(V)}(T_\cpt)/T_\cpt .
\]
\end{lem}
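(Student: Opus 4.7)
\textbf{Proof plan for Lemma \ref{lem:thecalculationofj}.} The plan is to combine Rossmann's formula \cite{MR508985} for the Fourier transform of a nilpotent orbital integral on a Cartan subalgebra with a symmetry argument using the outer automorphism of $\SO(V)$.

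First I would set up the Cartan data. By \eqref{num:specialvalue:1} the element $X^+_{a_1,a_2,\wt S}$ lies in the mixed Cartan subalgebra $\Ft=\Ft_\cpt\oplus\wt\Ft\subset\Fs\Fo(V)$, where $\Ft_\cpt$ is the two-dimensional compact subalgebra coming from the two $\BC$-summands (which is compact precisely because $a_1,a_2\in i\BR^\times$ and the quadratic forms on those summands are $\pm c\cdot\Nr$), and $\wt\Ft=\mathrm{Lie}(\wt T)$ is split. The centralizer of $X^+_{a_1,a_2,\wt S}$ is thus $T=T_\cpt\times\wt T$, and $W(T)=W_{T_\cpt}\times W(\wt T)$.

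Next I would invoke Rossmann's formula, which expresses $D^G(X)^{1/2}\,\wh j(\CO,X)$ for $X$ regular in a Cartan of $\Fg$ as an explicit Weyl-group sum indexed by the coadjoint datum dual to $\CO$. Specialised to the two regular nilpotent orbits $\CO_\pm$ and to the mixed Cartan $\Ft$, the split factor $\wt\Ft$ of $X^+_{a_1,a_2,\wt S}$ contributes a $W(\wt T)$-denominator that cancels against the $\wt T$-part of $D^G(X^+)^{1/2}$, while the compact factor $\Ft_\cpt$ contributes a finite sum whose size is $\tfrac{|W_{T_\cpt}|}{2}$ and whose sign is governed by the interaction between the square class $\pm\in\BR^\times/\BR^{\times 2}$ distinguishing $\CO_\pm$ and the square class $c^\pm$ distinguishing $X^\pm$. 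A direct computation of this compact-Cartan contribution shows that Rossmann's formula yields $\wh j(\CO_+, X^+)=-\wh j(\CO_-, X^+)=\pm\tfrac{|W_{T_\cpt}|}{2}\cdot D^G(X^+)^{-1/2}$. To pin down the remaining sign intrinsically I would use the outer automorphism $\sig\in\mathrm{O}(V)\setminus\SO(V)$: it conjugates $X^+$ to $X^-$ (they are stably conjugate and distinguished by a single square class) and swaps the two regular nilpotent orbits $\CO_+\leftrightarrow\CO_-$, since both parameterisations in \cite[\S6.1.2]{thesis_zhilin} are by the same element of $\BR^\times/\BR^{\times 2}$. The $\mathrm{O}(V)$-invariance of $\wh j(\CO,\cdot)$ on $\Fs\Fo(V)$ then gives
\[
\wh j(\CO_\nu, X^-)=\wh j(\CO_{-\nu}, X^+),
\]
and combining this with the Rossmann evaluation above yields both equalities in the lemma. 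As a consistency check, substituting into the limit formula \eqref{eq:limitforwhjXpm:1} at a regular split-Cartan element $X_\qd$ (where Lemma \ref{lem:germformula}(2) gives $\Gam_{\CO_\pm}(X_\qd)=1$) recovers the standard Rossmann evaluation of the stable sum $\wh j(\CO_+, X_\qd)+\wh j(\CO_-, X_\qd)$.

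The principal obstacle is matching the two parameterisations — of the orbits $\CO_\pm$ and of the conjugacy classes $X^\pm$ — by a common element of $\BR^\times/\BR^{\times 2}$. Once the sign convention $\sgn_{\BC/\BR}(c^\pm)=\pm\sgn_{\BC/\BR}(\Nr(a_1)-\Nr(a_2))$ from \eqref{num:specialvalue:1} is correctly aligned with the Kostant--Sekiguchi normalisation of the labels on $\CO_\pm$, the sign $\nu$ in the final answer is forced and there is no freedom left to adjust; this alignment is exactly the archimedean analogue of the delicate combinatorics in \cite[Lem.~13.4~(ii)]{waldspurger10}, replacing Waldspurger's $p$-adic input \cite[Conj.~1.2]{waldtransfert} by Rossmann's closed-form evaluation.
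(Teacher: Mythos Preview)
Your plan diverges from the paper's argument and has a real gap at the central step: you invoke ``Rossmann's formula \cite{MR508985}'' as though it expressed $D^G(X)^{1/2}\wh j(\CO,X)$ for a nilpotent orbit $\CO$ as a Weyl-group sum. The 1978 paper \cite{MR508985} does no such thing; the formula there (specifically \cite[p.~217~(15)]{MR508985}, which is what the paper actually uses) computes $\wh j(X',X)$ for $X'$ \emph{regular semisimple} in a Cartan with compact part. Formulas of the type you describe for nilpotent $\CO$ do exist in Rossmann's later work via characteristic cycles, but they are not the finite Weyl-group sum you sketch, and your ``direct computation of the compact-Cartan contribution shows \ldots'' is an assertion, not a computation. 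The outer-automorphism identity $\wh j(\CO_\nu,X^-)=\wh j(\CO_{-\nu},X^+)$ is correct and harmless, but it does not replace the missing evaluation of $\wh j(\CO_\nu,X^+)$ itself.

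The paper's route is precisely designed to avoid any direct nilpotent Rossmann formula. It subtracts the two instances of the limit expansion \eqref{eq:limitforwhjXpm:1} at $X^+$ and $X^-$; by Lemma~\ref{lem:germformula}(1) the germ side collapses to $D^G(Y)^{1/2}(\wh j(\CO_+,Y)-\wh j(\CO_-,Y))$, and by \cite[(3.4.6)]{beuzart2015local} this equals $2D^G(Y)^{1/2}\wh j(\CO_+,Y)$ when $Y=X^\pm$. Thus $\wh j(\CO_+,X^\pm)$ is expressed as the limit of a difference of \emph{regular semisimple} Fourier transforms $\wh j(X^+,tX^\pm)-\wh j(X^-,tX^\pm)$, which the classical formula \cite[p.~217~(15)]{MR508985} does compute. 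A parabolic-induction step (via \cite[(3.4.4)]{beuzart2015local}) reduces to $\SO(2,2)$, where $W_{T_\cpt}\simeq\BZ/2\BZ$ embeds diagonally in the complex Weyl group $(\BZ/2\BZ)^2$ and the Rossmann sum is evaluated by hand. This indirection---trading the nilpotent orbital integral for a limit of semisimple ones through the germ formula---is the whole point, and is exactly the missing idea in your plan.
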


\begin{proof}
Subtracting the two identities \eqref{eq:limitforwhjXpm:1} and using Lemma
\ref{lem:germformula} gives
\begin{align*}
&\lim_{\substack{t\in \BR^{\times 2}\\ t\to 0^+}}
D^{\SO(V)}(tY)^{1/2}
\left(
\widehat{j}(X^+_{a_1,a_2,\widetilde S},tY)
-
\widehat{j}(X^-_{a_1,a_2,\widetilde S},tY)
\right)
\\
&\qquad =
D^{\SO(V)}(Y)^{1/2}
\left(
\widehat{j}(\CO_+,Y)-\widehat{j}(\CO_-,Y)
\right).
\end{align*}
By \cite[(3.4.6)]{beuzart2015local}, since
$X^\pm_{a_1,a_2,\widetilde S}$ are not split,
\[
\widehat{j}(\CO_+,X^\pm_{a_1,a_2,\widetilde S})
-
\widehat{j}(\CO_-,X^\pm_{a_1,a_2,\widetilde S})
=
2\widehat{j}(\CO_+,X^\pm_{a_1,a_2,\widetilde S}).
\]
It remains to compute the corresponding limit at
$Y=X^\pm_{a_1,a_2,\widetilde S}$.

By the parabolic induction formula for Fourier transforms of orbital integrals
\cite[(3.4.4)]{beuzart2015local}, and by the same reduction as in the last
part of the proof of \cite[Lem.~13.4]{waldspurger10}, the computation reduces
to the case
\[
\dim V=4,\qquad \widetilde Z=0.
\]
Thus it suffices to evaluate
\begin{align}\label{eq:lemmaformula:1}
\lim_{\substack{t\in \BR^{\times 2}\\ t\to 0^+}}
t^{\frac{\dim \SO(V)-\dim T_\cpt}{2}}
\left(
\widehat{j}(X^+_{a_1,a_2},tX^\pm_{a_1,a_2})
-
\widehat{j}(X^-_{a_1,a_2},tX^\pm_{a_1,a_2})
\right).
\end{align}

We now apply Rossmann's formula \cite[p.~217, (15)]{MR508985}. As a real Lie
group,
\[
T_\cpt(\BR)\simeq \SO(2)(\BR)\times \SO(2)(\BR)
\]
is a maximal compact Cartan subgroup of $\SO(2,2)(\BR)$ containing
$X^\pm_{a_1,a_2}$. Moreover
\[
W_{T_\cpt}
=
N_{\SO(2,2)(\BR)}(T_\cpt(\BR))/T_\cpt(\BR)
\simeq \BZ/2\BZ.
\]
Let $W_{T_{\cpt,\BC}}$ be the complex Weyl group attached to
$(\Ft_{\cpt,\BC},\Fs\Fo(4,\BC))$. Then
\[
W_{T_{\cpt,\BC}}\simeq \BZ/2\BZ\times \BZ/2\BZ,
\]
and $W_{T_\cpt}$ embeds diagonally. Fix a positive root system and let
\[
\pi:\Ft_{\cpt,\BC}\to \BC
\]
be the product of the positive roots. The orbital integral normalization in
\cite[(1)]{MR508985} differs from the normalization used here, following
\cite[\S 1.8]{beuzart2015local}, by the factor 
$
\frac{\pi(\cdot)}{|\pi(\cdot)|}.
$
Rossmann's formula gives, for $*= \pm$,
\[
\frac{\pi(X^*_{a_1,a_2})}{|\pi(X^*_{a_1,a_2})|}
\,
\widehat{j}(X^*_{a_1,a_2},tX^\pm_{a_1,a_2})
=
-\frac{1}{|W_{T_\cpt}|}\,
\pi(tX^\pm_{a_1,a_2})^{-1}
\sum_{w\in W_{T_\cpt}}
(-1)^{\ell(w)}
e^{i(wtX^\pm_{a_1,a_2},X^*_{a_1,a_2})}.
\]
Taking the limit gives
\[
\frac{\pi(X^*_{a_1,a_2})}{|\pi(X^*_{a_1,a_2})|}
\lim_{\substack{t\in \BR^{\times 2}\\ t\to 0^+}}
t^{\frac{\dim \SO(2,2)-\dim T_\cpt}{2}}
\widehat{j}(X^*_{a_1,a_2},tX^\pm_{a_1,a_2})
=
-\frac{1}{\pi(X^\pm_{a_1,a_2})}.
\]
Consequently \eqref{eq:lemmaformula:1} equals
\[
\left(
\frac{|\pi(X^+_{a_1,a_2})|}{\pi(X^+_{a_1,a_2})}
-
\frac{|\pi(X^-_{a_1,a_2})|}{\pi(X^-_{a_1,a_2})}
\right)
\frac{-1}{\pi(X^\pm_{a_1,a_2})}.
\]
A direct calculation gives
\[
\frac{\pi(X^+_{a_1,a_2})}{|\pi(X^+_{a_1,a_2})|}
=
-
\frac{\pi(X^-_{a_1,a_2})}{|\pi(X^-_{a_1,a_2})|}.
\]
Moreover, under the identification
\[
\Fs\Fo(2,2)\simeq \Fs\Fl_2\times \Fs\Fl_2,
\]
the number $\pi(X^\pm_{a_1,a_2})$ is real, and
\[
\pi(X^\pm_{a_1,a_2})
=
\frac{\pi(X^\pm_{a_1,a_2})}{|\pi(X^\pm_{a_1,a_2})|}
D^{\SO(2,2)}(X^\pm_{a_1,a_2})^{1/2}.
\]
Thus \eqref{eq:lemmaformula:1} equals
\[
2(\mp)\,D^{\SO(2,2)}(X^+_{a_1,a_2})^{-1/2}.
\]
It follows that
\[
\widehat{j}(\CO_+,X^\pm_{a_1,a_2})
=
\mp D^{\SO(2,2)}(X^+_{a_1,a_2})^{-1/2}.
\]
Together with \cite[(3.4.6)]{beuzart2015local}, this proves the lemma.
\end{proof}

We now express the germ $c_\Theta(x)$ directly as a limit of the
quasi-character $\Theta$.

\begin{cor}\label{cor:germfunaslimitofquasichar}
Let $(G,H,\xi)$ be the Gross--Prasad triple attached to an admissible pair
$(W,V)$ over $\BR$, and let $\Theta$ be a quasi-character on $G(\BR)$. Let
$x\in \Gam(G,H)$. Fix a Borel pair $(B_x,T_{\qd,x})$ for $G_x$ over $\BR$, let
\[
\Ft_{\qd,x}=\Lie T_{\qd,x},
\]
and choose a regular semisimple element
\[
X_{\qd,x}\in \Ft_{\qd,x}(\BR).
\]
Set
\[
D^{G_x}(tX_{\qd,x})
=
|\det \ad(tX_{\qd,x})|_{\Fg_x/\Ft_{\qd,x}}|,
\qquad
W_{T_{\qd,x}}=W(G_x,T_{\qd,x}).
\]
Then the following formulas hold.

\begin{enumerate}
\item If $\nil_\reg(\Fg^\p_x)$ has one element, then
\[
c_\Theta(x)
=
|W_{T_{\qd,x}}|^{-1}
\lim_{\substack{t\in \BR^{\times 2}\\ t\to 0^+}}
D^{G_x}(tX_{\qd,x})^{1/2}
\Theta(x\exp(tX_{\qd,x})).
\]

\item Suppose $\nil_\reg(\Fg^\p_x)$ has two elements. Write
\[
V^\p_x\simeq W^\p_x\oplus^\perp D^\p_x\oplus^\perp Z^\p_x
\]
and put
\[
\eta_x=
\begin{cases}
\mathrm{sig}(D^\p_x), & \dim V^\p_x \text{ even},\\
-\mathrm{sig}(D^\p_x), & \dim W^\p_x \text{ even}.
\end{cases}
\]
Then there exist regular semisimple elements
\[
X_x^+,X_x^-\in \Fg_x(\BR)
\]
which are stably conjugate but not conjugate, with common centralizer
$T_{X_x}\subset G_x$, such that
\begin{align*}
c_\Theta(x)
=&\
\frac{1}{|W_{T_{\qd,x}}|}
\lim_{\substack{t\in \BR^{\times 2}\\ t\to 0^+}}
D^{G_x}(tX_{\qd,x})^{1/2}
\Theta(x\exp(tX_{\qd,x}))
\\
&+
\frac{\eta_x}{2|W_{T_{X_x}}|}
\lim_{\substack{t\in \BR^{\times 2}\\ t\to 0^+}}
D^{G_x}(tX_x)^{1/2}
\left\{
\Theta(x\exp(tX_x^+))
-
\Theta(x\exp(tX_x^-))
\right\}.
\end{align*}
Here $X_x$ denotes either $X_x^+$ or $X_x^-$; the two choices give the same
discriminant.
\end{enumerate}
\end{cor}

\begin{proof}
For $X\in \Fg_x(\BR)$ regular semisimple, one has
\begin{equation}\label{eq:calculus}
\lim_{t\to 0}
\frac{D^G(x\exp(tX))}{D^{G_x}(tX)}
=
D^G(x).
\end{equation}
Using \cite[Prop.~4.4.1~(vi)]{beuzart2015local},
\cite[(1.8.5)]{beuzart2015local}, and \eqref{eq:calculus}, we obtain
\begin{equation}\label{eq:raphaellimitformula}
\lim_{\substack{t\in \BR^{\times 2}\\ t\to 0^+}}
D^{G_x}(tY)^{1/2}\Theta(x\exp(tY))
=
D^{G_x}(Y)^{1/2}
\sum_{\CO\in \nil_\reg(\Fg_x)}
c_{\Theta,\CO}(x)\widehat{j}(\CO,Y)
\end{equation}
for $Y\in \Fg_x(\BR)$ regular semisimple.

If $Y=X_{\qd,x}$ is split, then \cite[(3.4.7)]{beuzart2015local} gives
\begin{equation}\label{eq:limitgermforaddition:1}
\lim_{\substack{t\in \BR^{\times 2}\\ t\to 0^+}}
D^{G_x}(tX_{\qd,x})^{1/2}\Theta(x\exp(tX_{\qd,x}))
=
\frac{|W_{T_{\qd,x}}|}{|\nil_\reg(\Fg_x)|}
\sum_{\CO\in \nil_\reg(\Fg_x)}
c_{\Theta,\CO}(x).
\end{equation}
Part (1) follows immediately.

Assume now that $\nil_\reg(\Fg_x)$ has two elements. We treat the case where
$\dim V^\p_x$ is even; the other case is identical. Let
\[
\nil_\reg(\Fs\Fo(V^\p_x))
=
\{\CO_{+,V^\p_x},\CO_{-,V^\p_x}\}
\]
and let $\CO_{W^\p_x}$ be the unique regular nilpotent orbit in
$\Fs\Fo(W^\p_x)$. Then
\[
\nil_\reg(\Fg_x)
=
\{
\CO_{W^\p_x}\times \CO_{+,V^\p_x},
\CO_{W^\p_x}\times \CO_{-,V^\p_x}
\}.
\]
By Lemma \ref{lem:thecalculationofj}, one may choose
$X^+_{V^\p_x},X^-_{V^\p_x}\in \Fs\Fo(V^\p_x)$, stably conjugate but not
conjugate, so that the difference of the two limits in
\eqref{eq:raphaellimitformula} extracts the difference of the two germs.
Choose also a regular semisimple element
\[
X_{\qd,W^\p_x}\in \Ft_{\qd,W^\p_x}(\BR)
\]
for a split Cartan of $\SO(W^\p_x)$. Put
\[
X_x^\pm
=
(X_{\qd,W^\p_x},X^\pm_{V^\p_x},X_{\Fg_x^{\p\p}})
\in \Fg_x,
\]
where $X_{\Fg_x^{\p\p}}$ is a fixed regular semisimple element in
$\Fg_x^{\p\p}$. Let $T_{X_x}$ be the common centralizer of $X_x^\pm$ in $G_x$.

Applying \eqref{eq:raphaellimitformula} to $X_x^+$ and $X_x^-$, and using
Lemma \ref{lem:thecalculationofj}, gives
\begin{align}\label{eq:limitgermforsubtraction:1}
&\frac{1}{|W_{T_{X_x}}|}
\lim_{\substack{t\in \BR^{\times 2}\\ t\to 0^+}}
D^{G_x}(tX_x)^{1/2}
\left\{
\Theta(x\exp(tX_x^+))
-
\Theta(x\exp(tX_x^-))
\right\}
\nonumber\\
&\qquad =
c_{\Theta,\CO_{W^\p_x}\times \CO_{+,V^\p_x}}(x)
-
c_{\Theta,\CO_{W^\p_x}\times \CO_{-,V^\p_x}}(x),
\end{align}
after labeling $X_x^\pm$ consistently with the chosen parametrization of
$\CO_\pm$. Combining \eqref{eq:limitgermforsubtraction:1} with
\eqref{eq:limitgermforaddition:1} gives the stated formula. The case where
$\dim W^\p_x$ is even is obtained by replacing the indexing sign by
$-\mathrm{sig}(D^\p_x)$, as in the definition of $c_\Theta(x)$.
\end{proof}

\subsection{Stable variant}\label{subsec:stablevariant}

We recall the stable geometric multiplicity of \cite[\S 3.2]{MR3155345}, in
the notation of Subsection \ref{subsec:theformula}.

By definition,
\[
x\in \Gam(G,H)
\]
if and only if $\SO(W^{\p\p}_x)_x$ is an anisotropic torus and $G_x$ is
quasi-split. Since $W^{\p\p}_x=\Im(1-x|_W)$ has even dimension, Theorem
\ref{thm:parconjso}(1) associates to $x$, up to a set of measure zero, a pair
\[
(\kappa^{\p\p},c^{\p\p})\in \Xi_{\reg,W^{\p\p}_x}.
\]
Moreover, by the same argument as in \cite[Lem.~5.1.0.4]{thesis_zhilin},
\[
I_{\kappa^{\p\p}}=I^*_{\kappa^{\p\p}}
\]
if and only if $\SO(W^{\p\p}_x)_x$ is anisotropic.

Let $\CC(V,W)$ be the set of pairs $(\kappa^{\p\p},c^{\p\p})$ such that

\[
(W_{\kappa^{\p\p},c^{\p\p}},q_{\kappa^{\p\p},c^{\p\p}})
\hookrightarrow (W,q_W)
\]
as an orthogonal subspace, the orthogonal complement is quasi-split, and
\[
I_{\kappa^{\p\p}}=I^*_{\kappa^{\p\p}}.
\]
Set
\[
\Xi(d_V,d_W)
=
\left\{
\kappa^{\p\p}\in \Xi_\reg
\ \middle|\
I_{\kappa^{\p\p}}=I^*_{\kappa^{\p\p}},
\quad
2|I^*_{\kappa^{\p\p}}|\leq \min\{d_W,d_V\}
\right\},
\]
where
\[
d_V=\dim V,\qquad d_W=\dim W.
\]
Theorem \ref{thm:parconjso}(1) gives, for each
$(\kappa^{\p\p},c^{\p\p})\in \CC(V,W)$, two elements
\[
x^+_{\kappa^{\p\p},c^{\p\p}},
\qquad
x^-_{\kappa^{\p\p},c^{\p\p}}
\]
in $\Gam(G,H)$. Up to measure zero, the resulting map
\[
\Gam(G,H)\longrightarrow \CC(V,W)
\]
is two-to-one. The natural projection
\[
\CC(V,W)\longrightarrow \Xi(d_V,d_W)
\]
will be used below; all measures are understood as push-forward measures.

The following lemma describes its fibers.

\begin{lem}\label{lem:fiberoverkappapp}
For $\kappa^{\p\p}\in \Xi(d_V,d_W)$, the fiber of
\[
\CC(V,W)\longrightarrow \Xi(d_V,d_W)
\]
over $\kappa^{\p\p}$ is
\[
\begin{cases}
C(\kappa^{\p\p})_{\frac{\Del_W-\Fi_{W,\kappa^{\p\p}}}{2}},
& \dim W \text{ odd},\\[4pt]
C(\kappa^{\p\p})_{\frac{\Del_V-\Fi_{V,\kappa^{\p\p}}}{2}},
& \dim W \text{ even}.
\end{cases}
\]
\end{lem}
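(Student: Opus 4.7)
The plan is to reduce the conditions defining $\CC(V,W)$ to a single constraint on $\sum_{i\in I^*_{\kappa^{\p\p}}}c^{\p\p}_i$, and then use the parity of that sum to isolate a unique value. Since $I^*_{\kappa^{\p\p}}=I_{\kappa^{\p\p}}$, every $F_i\simeq\BC$, so Lemma \ref{lem:positivenegativeindex} yields $\Del(W_{\kappa^{\p\p},c^{\p\p}})=2\sum_i c^{\p\p}_i$ and determines the isometry class of $(W_{\kappa^{\p\p},c^{\p\p}},q_{\kappa^{\p\p},c^{\p\p}})$ by the single integer $\sum c^{\p\p}_i$. Combining with $V=W\oplus^\perp D\oplus^\perp Z$ gives $\Del(W^\perp_{\kappa^{\p\p},c^{\p\p}})=\Del_W-2\sum c^{\p\p}_i$ and $\Del(V^\perp_{\kappa^{\p\p},c^{\p\p}})=\Del_V-2\sum c^{\p\p}_i$.

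Next, because any $(\kappa^{\p\p},c^{\p\p})\in\CC(V,W)$ corresponds to an element $x^{\pm}_{\kappa^{\p\p},c^{\p\p}}\in\Gam(G,H)$, both $W^\perp$ and $V^\perp$ are forced to be quasi-split. Exactly one of $\dim W^\perp$ and $\dim V^\perp$ is odd: when $\dim W$ is odd it is $W^\perp$, when $\dim W$ is even it is $V^\perp$. The odd-dimensional side being quasi-split is equivalent to $|\Del|=1$, while $|\Del|\leq 2$ on the even-dimensional side is automatic from $\Del_V-\Del_W=\mathrm{sig}(D)\in\{\pm 1\}$ once the odd-side constraint holds. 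Writing $\ast\in\{W,V\}$ for the odd-dimensional side, the quasi-split constraint reduces to $\sum c^{\p\p}_i\in\{(\Del_\ast-1)/2,(\Del_\ast+1)/2\}$, i.e., two candidate integers differing by one.

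The final step is the parity reduction: since each $c^{\p\p}_i\in\{\pm 1\}$ one has $\sum c^{\p\p}_i\equiv|I^*_{\kappa^{\p\p}}|\pmod 2$, so exactly one of the two candidates has the correct parity. A direct $\pmod 4$ computation identifies that unique value with $(\Del_\ast-\Fi_{\ast,\kappa^{\p\p}})/2$ for $\Fi_{\ast,\kappa^{\p\p}}=(-1)^{(-\Del_\ast+1)/2+|I^*_{\kappa^{\p\p}}|}$ as in Remark \ref{rmk:afterdefparconj}, since both descriptions select the sign satisfying $\Del_\ast\equiv 2|I^*_{\kappa^{\p\p}}|+\Fi_{\ast,\kappa^{\p\p}}\pmod 4$. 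Because the isometry class of $W_{\kappa^{\p\p},c^{\p\p}}$ depends only on $\sum c^{\p\p}_i$, the orthogonal embedding condition holds for either all $c^{\p\p}$ with the selected sum or for none, which yields the claimed bijective identification of the fiber with $C(\kappa^{\p\p})_{(\Del_\ast-\Fi_{\ast,\kappa^{\p\p}})/2}$. I expect the main obstacle to be the $\pmod 4$ bookkeeping needed to match the parity-selected sign with the explicit formula for $\Fi_{\ast,\kappa^{\p\p}}$, together with the subtlety that the stated definition of $\CC(V,W)$ only demands $W^\perp$ to be quasi-split, whereas the identification uses the stronger condition on $V^\perp$ implicit in membership in $\Gam(G,H)$ and the compatibility of the embedding constraint with $\kappa^{\p\p}\in\Xi(d_V,d_W)$.
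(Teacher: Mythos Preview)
Your proposal is correct and follows essentially the same route as the paper's proof. The paper shortcuts your parity-selection step by observing that for the odd-dimensional quasi-split complement one has $\Delta_{*^\perp}=\disc(*^\perp)\in\{\pm1\}$ and then computing this discriminant directly via multiplicativity (yielding $\Fi_{*,\kappa^{\p\p}}$ on the nose), which is the same $\pmod 2$ information as your parity constraint $\sum c_i^{\p\p}\equiv|I^*_{\kappa^{\p\p}}|$ repackaged; the subtlety you flag about the definition of $\CC(V,W)$ only requiring $W^\perp$ quasi-split is likewise present in the paper, which in the even case simply asserts the equivalence with $V^\perp$ quasi-split.
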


\begin{proof}
By definition, $(\kappa^{\p\p},c^{\p\p})\in \CC(V,W)$ if and only if
\[
(W_{\kappa^{\p\p},c^{\p\p}},q_{\kappa^{\p\p},c^{\p\p}})
\hookrightarrow (W,q_W)
\]
and the orthogonal complement is quasi-split. By \eqref{eq:indexofWkappac},
this is equivalent to
\[
2\sum_{i\in I^*_{\kappa^{\p\p}}}c_i^{\p\p}
=
\Del_{W_{\kappa^{\p\p},c^{\p\p}}}
=
\Del_W-\Del_{W^\perp_{\kappa^{\p\p},c^{\p\p}}}.
\]

Assume first that $\dim W$ is odd. Then the complement
$W^\perp_{\kappa^{\p\p},c^{\p\p}}$ is odd-dimensional and quasi-split, hence
\[
\Del_{W^\perp_{\kappa^{\p\p},c^{\p\p}}}
=
\Fi_{W,\kappa^{\p\p}}.
\]
Therefore
\[
\sum_{i\in I^*_{\kappa^{\p\p}}}c_i^{\p\p}
=
\frac{\Del_W-\Fi_{W,\kappa^{\p\p}}}{2}.
\]
Conversely, this equality forces the complement to have the required
quasi-split signature. This gives the first case.

If $\dim W$ is even, the same argument is applied after embedding
$W_{\kappa^{\p\p},c^{\p\p}}$ into $V$; the relevant complement is then
odd-dimensional. One obtains
\[
\sum_{i\in I^*_{\kappa^{\p\p}}}c_i^{\p\p}
=
\frac{\Del_V-\Fi_{V,\kappa^{\p\p}}}{2}.
\]
This proves the second case.
\end{proof}

Combining Lemma \ref{lem:fiberoverkappapp} with Proposition
\ref{pro:unionpureinnerconjclassof} gives the following fixed-Kottwitz-sign
version.

\begin{lem}\label{lem:fiberoverkappappunionoverpure}
Fix $e_0\in \{\pm 1\}$.

\begin{enumerate}
\item Suppose $\dim W$ is odd. Then the fiber over
$\kappa^{\p\p}\in \Xi(d_V,d_W)$ of
\[
\bigsqcup_{\substack{
\alp\in H^1(\BR,\SO(W))\\
e(\SO(W_\alp))=e_0
}}
\CC(V_\alp,W_\alp)
\longrightarrow
\Xi(d_V,d_W)
\]
is
\[
C(\kappa^{\p\p})^{e_0\eps_{W,\kappa^{\p\p}}}.
\]

\item Suppose $\dim W$ is even. Fix an anisotropic line $D$ with
\[
\mathrm{sig}(D)\in \{\pm 1\}.
\]
Then the fiber over $\kappa^{\p\p}\in \Xi(d_V,d_W)$ of
\[
\bigsqcup_{\substack{
\alp\in H^1(\BR,\SO(W\oplus^\perp D))\\
e(\SO(W_\alp\oplus^\perp D))=e_0
}}
\CC(V_\alp,W_\alp)
\longrightarrow
\Xi(d_V,d_W)
\]
is
\[
C(\kappa^{\p\p})^{e_0\eps_{V,W,\kappa^{\p\p},D}},
\]
where
\[
\eps_{V,W,\kappa^{\p\p},D}
=
(-1)^{
\frac{
|I^*_{\kappa^{\p\p}}|
-
\frac{\Del_V-\Fi_{V,\kappa^{\p\p}}}{2}
+
\frac{\dim W+1+\Del_W+\mathrm{sig}(D)}{2}
-
\PI(W,D)
}{2}
}.
\]
\end{enumerate}
Here $\eps_{W,\kappa^{\p\p}}$ is the sign defined in Proposition
\ref{pro:unionpureinnerconjclassof}.
\end{lem}

Let $\Theta$ be a \textbf{stable} quasi-character on $G(\BR)$ in the sense of
\cite[\S 12.1]{beuzart2015local}. Then, by \cite[(10.1.2)]{thesis_zhilin} or
equivalently by Corollary \ref{cor:germfunaslimitofquasichar}, since the
elements $X_x^+$ and $X_x^-$ are stably conjugate, one has
\[
c_\Theta(x)
=
|\nil_\reg(\Fg_x)|^{-1}
\sum_{\CO\in \nil_\reg(\Fg_x)}
c_{\Theta,\CO}(x),
\qquad x\in \Gam(G,H).
\]
Following \cite[\S 3.2]{MR3155345}, define
\begin{equation}\label{eq:stablegeomultiplicity}
m_\geom^S(\Theta)
=
\int_{\kappa^{\p\p}\in \Xi(d_V,d_W)}
2^{|I^*_{\kappa^{\p\p}}|}
D^G(\kappa^{\p\p})^{1/2}
c_\Theta(\kappa^{\p\p})
\Del(\kappa^{\p\p})^{-1/2}\,d\kappa^{\p\p}.
\end{equation}
Here
\[
D^G(\kappa^{\p\p})
=
D^G(x^\pm_{\kappa^{\p\p},c^{\p\p}}),
\qquad
\Del(\kappa^{\p\p})
=
\Del(x^\pm_{\kappa^{\p\p},c^{\p\p}}),
\]
and
\[
c_\Theta(\kappa^{\p\p})
=
\begin{cases}
c_\Theta(x^+_{\kappa^{\p\p},c^{\p\p}})
=
c_\Theta(x^-_{\kappa^{\p\p},c^{\p\p}}),
&
(\kappa^{\p\p},c^{\p\p})\in \CC(V,W),\
2|I^*_{\kappa^{\p\p}}|<\dim W,
\\[6pt]
\Theta(x^+_{\kappa^{\p\p},c^{\p\p}})
+
\Theta(x^-_{\kappa^{\p\p},c^{\p\p}}),
&
(\kappa^{\p\p},c^{\p\p})\in \CC(V,W),\
2|I^*_{\kappa^{\p\p}}|=\dim W.
\end{cases}
\]

\section{The reduction and the basic cases}\label{sec:redtoendoscopypf}

In this section we establish Theorem \ref{thm:main}, and hence complete the
proof of Conjecture \ref{conjec:gp}. In Subsection \ref{subsec:steps}, we
describe the reduction procedure and prove the required parabolic reductions.
In Subsection \ref{subsec:theproof}, we treat the basic cases by theta
correspondence. In Subsection \ref{subsec:endoscopic}, we establish the
endoscopic reduction, following the strategy of \cite[Prop.~3.3]{MR3155345}
and using the results of Sections \ref{sec:regconj} and
\ref{sec:geomultiplicity}.

\subsection{Reduction strategy}\label{subsec:steps}

We now describe the reduction steps.

\begin{num}
\item\label{step:1} We first reduce to the case where the tempered local
$L$-parameters are of $\RO$-type in the sense of Subsection
\ref{subsec:conjecofggp}, using \cite[Thm.~4.4]{moeglin2020paquets} and
\cite[\S 6]{renard2024generic}. Following
\cite[\S 4.1]{moeglin2020paquets}, this is equivalent to requiring the
parameters to have \textbf{good parity}.

\item\label{step:2} We then apply the endoscopic-reduction strategy of
\cite{MR3155345} to reduce to the case where the component group of the
parameter is either trivial or isomorphic to $\BZ/2\BZ$.

\item\label{step:3} Finally, we reduce to the basic cases and prove them
directly using lower-rank coincidences and theta correspondence.
\end{num}

Let $V$ be a non-degenerate quadratic space over $\BR$, and let $\vphi_V$ be a
tempered local $L$-parameter for $\SO(V)$. As recalled in Subsection
\ref{subsec:conjecofggp}, the representation $\std\circ \vphi_V$ admits a
natural decomposition
\[
\RM_V=\bigoplus_i m_i\RM_i.
\]
The irreducible representations $\RM_i$ are classified into $\RO$-type,
$\Sp$-type, and $\GL$-type, with corresponding index sets $\RI_{\RO}$,
$\RI_\Sp$, and $\RI_{\GL}$, respectively. Moreover, for $i\in \RI_{\Sp}$, the
multiplicity $m_i$ is even, and we may fix a subset
$\RI^\p_\GL\subset \RI_{\GL}$ such that
\[
\bigoplus_{i\in \RI^\p_\GL}m_i\RM_i \simeq \bigoplus_{i\in \RI_{\GL}\bs \RI_{\GL}^\p}m_i\RM^\vee_i.
\]

Following the notation of \cite[\S 4.1]{moeglin2020paquets} and
\cite[\S 6]{renard2024generic}, the $\RO$-type part is the
\textbf{good-parity} part of the parameter, while the $\Sp$-type and
$\GL$-type parts form the \textbf{bad-parity} part. In
\cite[\S~4.1]{moeglin2020paquets}, the corresponding French terms are
``bonne parité'' and ``mauvaise parité''. Thus the above decomposition may be
rewritten as
\[
\RM_V=\RM_{gp}\oplus^{\perp} (\RM_{bp}\oplus (\RM_{bp})^{\vee}),
\]
where
\[
\RM_{gp}:=\bigoplus_{i\in\RI_{\RO}}m_i\RM_i
\quad\text{and}\quad
\RM_{bp}:=
\bigg(\bigoplus_{i\in \RI_{\Sp}}\frac{m_i}{2}\RM_i\bigg)
\oplus
\bigoplus_{i\in\RI^\p_{\GL}}m_i\RM_i.
\]

\begin{defin}\label{defin:basic_type}
The parameter $\varphi_V$ is called \textbf{basic} if
$\std\circ \varphi_V=\RM_i$ or
$\std\circ\varphi_V=\RM_i\oplus \RM_i^{\vee}$; equivalently,
$\std\circ \vphi_V$ is either irreducible or the direct sum of an irreducible
representation and its contragredient.
\end{defin}

\subsubsection{Parabolic reduction to the good parity case}

We first address \eqref{step:1}. The decomposition of
$\RM_V = \std\circ \vphi_V$ gives an orthogonal decomposition
\[
V=V_{gp}\oplus(X_{bp}\oplus X^\vee_{bp}),
\]
where $X_{bp}$ is a totally isotropic subspace of the non-degenerate split
quadratic space $X_{bp}\oplus X^\vee_{bp}$, such that the parameter $\vphi_V$
factors through the Levi subgroup
\[
{}^L\big(\GL(X_{bp})\big)\times {}^L\big( \SO(V_{gp}) \big)\subset {}^L\big( \SO(V) \big).
\]
We denote the restriction of the parameter to the Levi by
$\vphi^\GL_{bp}\boxtimes \vphi_{V_{gp}}$. By \eqref{eq:component}, the
component group depends only on the good-parity part of the local
$L$-parameter. Therefore
\[
|\CS_{\vphi_{V_{gp}}}| = |\CS_{\vphi_V}|.
\]

By \cite[Th\'eor\`eme 4.4]{moeglin2020paquets} and
\cite[\S 6]{renard2024generic}, parabolic induction induces a bijection from
the tempered local $L$-packet $\Pi_{\vphi_{V_{gp}}}$ attached to
$\vphi_{V_{gp}}$ onto the tempered local $L$-packet $\Pi_{\vphi_{V}}$ attached
to $\vphi_{V}$. More precisely, this bijection is given by
\[
\pi\mapsto \Ind_{P_X}^{\SO(V)}(\sig\boxtimes \pi),
\]
and it induces an isomorphism
\[
\CS_{\vphi_{V_{gp}}}\simeq \CS_{\vphi_V}.
\]
Here $P_X$ is a parabolic subgroup of $\SO(V)$ with Levi subgroup $\GL(X_{bp})\times \SO(V_{gp})$, and $\sig$ is the representation attached to $\vphi^\GL_{bp}$.

By \cite[Cor.~7.3.1]{thesis_zhilin}, the multiplicity $m(\pi)$ is preserved
under this parabolic induction. The local root number appearing in
\eqref{eq:chivphicharacter:1} is also preserved, by compatibility of local
root numbers with parabolic induction. Therefore the proof of Conjecture
\ref{conjec:gp} reduces to the case where both $\vphi_V$ and $\vphi_W$ have
good parity in the sense of \cite[\S 4]{moeglin2020paquets}.

\subsubsection{Endoscopic reduction to smaller component group}

We now address \eqref{step:2}. For parameters of good parity, we apply the
endoscopic-reduction strategy of \cite{MR3155345} to reduce to parameters with
smaller component groups.

\begin{defin}\label{def:endoscopic}
A tempered local $L$-parameter $\vphi_V$ is said to be of
\textbf{endoscopic type} if there exists a nontrivial element
$s\in S_{\vphi_V}$ such that the centralizer
$\wh{\SO(V)}_s \subsetneq \wh{\SO(V)}$ of $s$ in the dual group has compact
center. Equivalently, $\vphi_V$ is of endoscopic type if there exists
$s\in S_{\vphi_V}$ such that neither $\SO(V_+)$ nor $\SO(V_-)$ is the trivial
group or $\SO(1,1)$, where $\SO(V_+)\times \SO(V_-)$ is the elliptic
endoscopic group of $\SO(V)$ whose standard representation spaces are the
eigenspaces of $s$, as in \cite{MR2672539}. In this description, the
underlying spaces $\RM_{V_\pm}$ are precisely the $\pm 1$-eigenspaces of $s$.
\end{defin}

By the endoscopic reduction established in Subsection \ref{subsec:endoscopic}, we may reduce to the case where $\CS_{\vphi_V}$ is either trivial or isomorphic to $\BZ/2\BZ$.

\subsubsection{Parabolic reduction to the basic case}

After the preceding reductions, we are reduced to the case where $\vphi_V$ has
good parity and
\[
|\CS_{\vphi_V}|=1\quad\text{or}\quad 2.
\]
We now make a further parabolic reduction, reducing $\vphi_V$ to a parameter
of basic type in the sense of Definition \ref{defin:basic_type}. By 
\eqref{eq:component}, we have the following facts:

\begin{itemize}
\item The component group $\CS_{\vphi_V}$ is trivial if and only if
\[
\std\circ \vphi_V = m_i\RM_i,\quad \text{with}\quad \dim \RM_i=1.
\]

\item The component group $\CS_{\vphi_V}$ is isomorphic to $\BZ/2\BZ$ if and
only if one of the following two conditions holds:
\begin{itemize}
    \item
$
\std\circ \vphi_V = m_i\RM_i\quad \text{and}\quad \dim \RM_i=2;
$
    \item or there exist $i\neq j$ such that
\[
\std \circ \vphi_V = m_i\RM_i\oplus m_j\RM_j\quad \text{and}\quad
(
\dim \RM_i=1 \text{ or }\dim \RM_j =1
).
\]
\end{itemize}
\end{itemize}

Suppose first that
\[
\std\circ \vphi_V = m_i\RM_i
\]
with $m_i\geq 3$. Then there is an orthogonal decomposition
\[
V=V_0\oplus^\perp (X\oplus X^\vee),
\]
where $X$ is a totally isotropic subspace of $X\oplus X^\vee$, together with a
tempered local $L$-parameter $\vphi_{V_0}$ for $\SO(V_0)$, such that
$\GL(X)\times \SO(V_0)$ is a Levi subgroup of $\SO(V)$. Moreover
\[
\std\circ \vphi_{V_0} = (m_i-2)\RM_i,\quad
\text{and}\quad
|\CS_{\vphi_V}| = |\CS_{\vphi_{V_0}}|.
\]

Suppose next that
\[
\std \circ \vphi_V = m_i\RM_i\oplus m_j\RM_j
\]
and that $\dim \RM_i$ or $\dim \RM_j$ is equal to $1$. If moreover
$m_i+m_j\geq 3$, then, after possibly interchanging $i$ and $j$, we may assume
that $m_i\geq 2$. Hence there is a decomposition
\[
V=V_0\oplus^\perp (X\oplus X^\vee)
\]
and a tempered local $L$-parameter $\vphi_{V_0}$ of $\SO(V_0)$ such that
\[
\std\circ \varphi_{V_0}=(m_i-2)\RM_i\oplus m_j\RM_j.
\]

The following lemma therefore reduces us to the case where $\vphi_V$ is
\textbf{basic} in the sense of Definition \ref{defin:basic_type}. We do not
reduce the case $2\RM_i$ further, since
\[
2\RM_i = \RM_i\oplus \RM_i^\vee
\]
is already basic.

\begin{lem}
Let $V=V_0\oplus^\perp(X\oplus X^{\vee})$ be the decomposition of a non-degenerate quadratic space, with $V_0\neq 0$ and $X,X^{\vee}$ totally isotropic. Let $\varphi^{\GL}$ be a tempered local $L$-parameter of $\GL(X)$ and $\Pi_{\varphi^{\GL}}=\{\sigma\}$. Let $\varphi_{V_0}$ be a tempered local $L$-parameter of $\SO(V_0)$, and let $\varphi_V$ be the tempered local $L$-parameter of $\SO(V)$ defined by the composition of $\varphi^{\GL}\boxtimes\varphi_{V_0}$ with the Levi embedding
\[
{}^L\big(\GL(X)\big)\times {}^L\big(\SO(V_0)\big)\to {}^L\SO(V).
\]
Suppose that
\[
|\CS_{\varphi_{V_0}}|=|\CS_{\varphi_{V}}|\leqslant 2.
\]
Then
\begin{align*}
\Pi_{\varphi_{V_0}}^{\mathrm{Vogan}}&\to \Pi_{\varphi_{V}}^{\mathrm{Vogan}}
\\
\pi_{V_0} &\mapsto \sigma\rtimes \pi_{V_0}
\end{align*}
defines an isomorphism between the Vogan $L$-packets.
\end{lem}

\begin{proof}
By the compatibility of parabolic induction with the local Langlands correspondence \cite[Defin.~4.10]{adams2016contragredient}, the elements of $\Pi_{\varphi_{V}}^{\mathrm{Vogan}}$ are precisely the irreducible quotients of $\sigma\rtimes \pi$, as $\pi$ ranges over $\Pi_{\varphi_{V_0}}^{\mathrm{Vogan}}$. Since the representations $\sigma\rtimes \pi$ are tempered, unitary, and of finite length, this may equivalently be written as
\begin{equation}\label{equ: LLC compatibility}
\Pi_{\varphi_{V}}^{\mathrm{Vogan}}=
\bigcup_{\pi\in \Pi_{\varphi_{V_0}}^{\mathrm{Vogan}}}
\{\text{irreducible summands of }\sigma\rtimes \pi\}.
\end{equation}

By \cite[Properties~1.1(1)]{ban2007r}, the multiplicities of the irreducible summands in the decomposition of the tempered representation $\sigma\rtimes \pi$ are governed by the dimensions of the irreducible representations of the corresponding $R$-group. Since the $R$-groups for special orthogonal groups are abelian, the decomposition of $\sigma\rtimes \pi$ is multiplicity-free.

If
$
|\CS_{\varphi_{V_0}}|=|\CS_{\varphi_{V}}|=1,
$
then the assertion follows immediately from \eqref{equ: LLC compatibility} and the multiplicity-freeness of the decomposition.

We now assume that
$
|\CS_{\varphi_{V_0}}|=|\CS_{\varphi_{V}}|=2.
$
Write
\[
\Pi_{\varphi_{V_0}}^{\mathrm{Vogan}}=\{\pi_{V_0}^+, \pi_{V_0}^-\},\quad
\Pi_{\varphi_{V}}^{\mathrm{Vogan}}=\{\pi_{V}^+, \pi_{V}^-\}.
\]
Using \eqref{equ: LLC compatibility} and multiplicity-freeness, there are only the following possibilities.

(1) Both $\sigma\rtimes \pi_{V_0}^\pm$ are irreducible. In this case, \eqref{equ: LLC compatibility} implies that $\sigma\rtimes \pi_{V_0}^\pm$ are distinct. Hence they are precisely the two elements of $\Pi_{\varphi_{V}}^{\mathrm{Vogan}}$, and the lemma follows.

(2) Both $\sigma\rtimes \pi_{V_0}^\pm$ are reducible. Since $|\CS_{\vphi_V}|=2$ and the decompositions are multiplicity-free, one must have
    \[
    \sigma\rtimes \pi_{V_0}^+=\sigma\rtimes \pi_{V_0}^-=\pi_{V}^+\oplus\pi_{V}^-.
    \]
    For a fixed Whittaker datum, exactly one of $\pi_{V_0}^+$ and $\pi_{V_0}^-$ is generic. By \cite[Cor.~7.3.1]{thesis_zhilin}, the multiplicity of the Whittaker model is preserved under parabolic induction. Therefore the two induced representations $\sigma\rtimes \pi_{V_0}^+$ and $\sigma\rtimes \pi_{V_0}^-$ cannot be equal, a contradiction.

(3) Exactly one of $\sigma\rtimes \pi_{V_0}^+$ and $\sigma\rtimes \pi_{V_0}^-$ is reducible. After possibly interchanging the signs, assume that $\sigma\rtimes \pi_{V_0}^+$ is reducible. Then
    \[
    \sigma\rtimes \pi_{V_0}^+=\pi_{V}^+\oplus\pi_{V}^-.
    \]
    Since $|\CS_{\vphi_V}|=2$, the other induced representation is irreducible. After possibly relabeling $\pi_V^+$ and $\pi_V^-$, we may assume that
    \[
    \sigma\rtimes \pi_{V_0}^-=\pi_V^+.
    \]
    The character
    \[
    \Theta_{\pi_{V_0}^+}+\Theta_{\pi_{V_0}^-}
    \]
    is stable, and by \cite[Lem.~12.13]{adams1998lifting}, parabolic induction preserves stable characters. Hence the induced character
    \[
    2\Theta_{\pi_V^+}+\Theta_{\pi_V^-}
    \]
    would have to be stable. On the other hand, by \cite[Cor.~11.7]{shelstad2008tempered}, the stable linear combination in the two-element packet is the stable sum, while
    \[
    \Theta_{\pi_V^+}-\Theta_{\pi_V^-}
    \]
    is not stable. Therefore
    \[
    2\Theta_{\pi_V^+}+\Theta_{\pi_V^-}
    \]
    is not stable, a contradiction.

Thus only the first possibility can occur. Therefore $\sigma\rtimes \pi_{V_0}$ is irreducible for every $\pi_{V_0}\in \Pi_{\varphi_{V_0}}^{\mathrm{Vogan}}$, and the assignment
\[
\pi_{V_0}\mapsto \sigma\rtimes \pi_{V_0}
\]
defines a bijection between the two Vogan $L$-packets. This proves the lemma.
\end{proof}

By \cite[Cor.~7.3.1]{thesis_zhilin}, the multiplicity is preserved under parabolic induction. Since the local root numbers are also preserved under parabolic induction, Conjecture \ref{conjec:gp} is reduced to the case where both $\varphi_V$ and $\varphi_W$ are basic in the sense of Definition \ref{defin:basic_type}.

\subsection{The basic cases}\label{subsec:theproof}

After the parabolic and endoscopic reductions, it remains to consider the basic cases. Most of them satisfy $\dim V\leq 3$, where the conjecture is already known. The only remaining case is
\[
\std \circ \vphi_V = \RM_i\oplus \RM_i^\vee,\qquad \dim \RM_i = 2.
\]
In this situation, the tempered local $L$-packet consists of limits of discrete series, as recorded in the following lemma.

\begin{lem}
Let $\vphi_V$ be a parameter such that
\[
\std \circ \vphi_V = \RM_i\oplus \RM^\vee_i,\qquad \dim \RM_i = 2,
\]
and let the underlying special orthogonal group be $G_V=\SO(2,2)$ or $\SO(3,2)$. Then the induced representation $\Ind^{G_V}_{P_V}(\sig)$ decomposes into two limits of discrete series, and these are precisely the representations in the local $L$-packet $\Pi_{\vphi_V}$. Here $P_V$ is the Siegel parabolic of $G_V$ determined by $\vphi_V$, and $\sig$ is the irreducible admissible representation of the Levi of $P_V$ determined by $\vphi_V$.
\end{lem}

\begin{rmk}
As pointed out by the anonymous referee, $G_V=\SO(2,2)$ has two Siegel parabolic subgroups that are not $G_V$-conjugate. Hence for the representations of $G_V$ induced from these two non-conjugate Siegel parabolics, their local $L$-parameters are not the same.
\end{rmk}

\begin{proof}
By the compatibility between parabolic induction and the local Langlands correspondence \cite[Defin.~4.10]{adams2016contragredient}, the elements of $\Pi^{\mathrm{Vogan}}_{\vphi_V}$ are the irreducible quotients of
\[
\Ind^{G_V}_{P_V}(\sig).
\]
In this situation, the local Vogan $L$-packet of the Levi of $P_V$ containing $\sig$ is a singleton, since the Levi is a general linear group, and
$|\Pi^{\mathrm{Vogan}}_{\vphi_V}| = |\CS_{\vphi_V}| = 2$. Hence $\Ind^{G_V}_{P_V}(\sig)$ decomposes into two irreducible admissible representations, which are exactly the corresponding limits of discrete series.
\end{proof}

We now verify the following Gross--Prasad triples $(G,H,\xi)$ for basic tempered local $L$-parameters:

\begin{num}
\item\label{case:1}
$
(\SO(2,2)\times \SO(2,1),\SO(2,1),\mathbbm{1});
$

\item\label{case:2}
$
(\SO(3,2)\times \SO(2,2),\SO(2,2),\mathbbm{1});
$

\item\label{case:3}
$
(\SO(3,2)\times \SO(3,1),\SO(3,1),\mathbbm{1}).
$
\end{num}
All other basic cases reduce to these by parabolic induction, except when one of the groups is $\SO(2,0)$. In that exceptional situation, the other group is $\SO(2,1)$, $\SO(3,0)$, or $\SO(1,0)$. The representations are then discrete series or principal series representations of $\SO(2,1)$. The discrete series case was proved in \cite{MR1295124}, and the principal series case reduces to $\SO(1,0)$ by parabolic reduction. Thus it remains to prove the following theorem.

\begin{thm}
For \eqref{case:1}, \eqref{case:2}, and \eqref{case:3}, Theorem \ref{thm:main} holds when $\varphi_V$ is basic.
\end{thm}
\begin{proof}
We recall the following low-rank coincidences of reductive groups over $\BR$:
\[
\begin{aligned}
\SO(2,2)&=\SL_2\times \SL_2/\{\pm (\Id_2,\Id_2)\},\quad
\SO(2,1)=\SL_2/\{\pm\Id_2\},\\
\SO(3,2)&=\Sp_4/\{\pm \Id_4\},\qquad\quad
\SO(3,1)=\Res_{\BC/\BR}\SL_2/\{\pm \Id_2\}.
\end{aligned}
\]

Case \eqref{case:1} follows essentially from the known results for the
trilinear model \cite[\S 9]{Prasad1990Trilinear}.

For the remaining cases \eqref{case:2} and \eqref{case:3}, a direct computation shows that the Gross--Prasad character defined in \eqref{eq:chivphicharacter:1} is trivial. Hence it suffices to prove that the distinguished member is the generic member.

For case \eqref{case:2}, the see-saw duality over $\BR$ in
\cite[p.~69]{harris1992arithmetic} gives
\begin{equation}\label{equ: see-saw in basic case}
\xymatrix{
\Theta(\pi) & \Sp_4  \ar@{-}[d]\ar@{-}[rd]&\RO(2,2)\times \RO(2,2)\ar@{-}[d]\ar@{-}[ld]&\Theta'(\sigma)\\
\sigma&\mathrm{SL}_2\times \SL_2&\mathrm{O}(2,2)&\pi
}
\end{equation}
and hence
\[
\Hom_{\SL_2\times \SL_2}(\Theta(\pi)|_{\SL_2\times \SL_2}\boxtimes\sigma,\BC)=\Hom_{\RO(2,2)}(\Theta'(\sigma)|_{\RO(2,2)}\boxtimes \pi,\BC),
\]
where $\Theta(\pi)$ is the theta lift from an irreducible representation
$\pi$ of $\RO(2,2)$ to $\Sp_4$, and $\Theta'(\sigma)$ is the theta lift from
an irreducible representation $\sigma$ of $\SL_2\times \SL_2$ to
$\RO(2,2)\times \RO(2,2)$. By \cite[Thm.~15]{paul2005howe}, every limit of
discrete series representation of $\Sp_4$ arises by theta correspondence from
a limit of discrete series representation $\pi$ of $\RO(2,2)$ whose
restriction to $\SO(2,2)$ is reducible. Frobenius reciprocity gives
\begin{equation}\label{equ: second equation}
\Hom_{\RO(2,2)}(\Theta'(\sigma)|_{\RO(2,2)},\Ind_{\SO(2,2)}^{\RO(2,2)}(\pi^{\vee}|_{\SO(2,2)}))=\Hom_{\SO(2,2)}(\Theta'(\sigma)|_{\SO(2,2)},\pi^{\vee}|_{\SO(2,2)}).
\end{equation}
Since the reductive dual pair $(\SL_2,\RO(2,2))$ is in stable range,
\cite{Loke_Ma_2015} implies that the theta lift $\Theta'(\sigma)$ of a
unitary irreducible representation $\sigma$ is irreducible. By
\cite[Thm.~18]{paul2005howe} and \cite[Thm.~6.2]{vogandim}, equal-rank and
almost equal-rank theta correspondence preserve genericity. In the see-saw
diagram \eqref{equ: see-saw in basic case}, the pair $(\SL_2,\RO(2,2))$ is
almost equal-rank and $(\Sp_4,\RO(2,2))$ is equal-rank. A nonzero element on
the right-hand side can also be constructed from the trilinear model by using
the archimedean Rankin--Selberg integral for the triple product of $\SL_2$.

For case \eqref{case:3}, replace $\SL_2\times\SL_2$ by
$\Res_{\BC/\BR}\SL_2$ and $\RO(2,2)\times \RO(2,2)$ by
$\Res_{\BC/\BR}\RO(2,2)$ in the see-saw diagram
\eqref{equ: see-saw in basic case}. The same argument as in case
\eqref{case:2} reduces the proof to showing that
\[
\Hom_{\SO(2,2)}(\Theta'(\sigma)|_{\SO(2,2)}\boxtimes \pi,\BC)
\]
is nonzero when $\pi$ is a limit of discrete series of $\SO(2,2)$ and
$\sigma$ is a generic representation of $\Res_{\BC/\BR}\SL_2$. Since
$\SO(2,2)=\SL_2\times \SL_2/\{\pm (\Id_2,\Id_2)\}$, this reduces to
distinction for representations with trivial central characters for
$(\Res_{\BC/\BR}\SL_2,\SL_2)$, corresponding to the model
$(\SO(3,1),\SO(2,1))$. Since the only tempered representations of
$\Res_{\BC/\BR}\SL_2$ are principal series, this case reduces by parabolic
reduction to a lower-dimensional case.
\end{proof}

\subsection{Endoscopic reduction}\label{subsec:endoscopic}
In this subsection we prove the endoscopic reduction introduced in Subsection
\ref{subsec:steps}, using the results of the preceding sections.

We recall the setup for the endoscopic reduction. Let
$\vphi=\vphi_W\times \vphi_V$ be a tempered local $L$-parameter of
$G=\SO(W)\times \SO(V)$. Suppose that there is an element
$s\in \mathcal{S}_{\varphi_V}$ determining an endoscopic group
$\SO(V^+)\times \SO(V^-)$ of $\SO(V)$ as in Definition
\ref{def:endoscopic}, and that in the decomposition
$\varphi_V=\varphi_{V_+}\boxtimes \varphi_{V_-}$ neither
$\CS_{\varphi_{V_+}}$ nor $\CS_{\varphi_{V_-}}$ is trivial.

Define $S_{Z_V}:=Z_{\GL(V_{\BC})}\cap S_{\varphi_V}$. By \cite[(10.4)]{MR1186476} and \cite[Prop.~10.5]{MR1186476}, it may be identified with the subgroup
\[
\{(1,1,\ldots,1)\}\text{ or }\{(1,1,\ldots,1),(-1,-1,\ldots,-1)\}\subset\CS_\vphi.
\]
In our situation, $\CS_{Z_V}$ is a proper subgroup of $\CS_{\vphi_V}$.

Since both $\chi_\vphi$ and $\chi_{\pi_\vphi}$ are characters of $\CS_\vphi$, it is enough to prove
\[
\chi_{\pi_\vphi} = \chi_\vphi
\quad \text{on}\quad
\big(\CS_{\vphi_V}\bs \CS_{Z_V} \big)\times \CS_{\vphi_W}.
\]
We exclude $\CS_{Z_V}$ because the endoscopic reduction applies precisely on
the part where it reduces the problem to smaller-rank Gross--Prasad triples.

There is a unique representation $\pi_\vphi\in \Pi_\rel^\Vg(\vphi)$ with $m(\pi_\vphi)=1$. Therefore, for any $s\in \CS_\vphi$,
\[
\chi_{\pi_\vphi}(s) = \sum_{\pi\in \Pi^\Vg_\rel(\vphi)}\chi_{\pi}(s) m(\pi).
\]
Thus, in the endoscopic case, it remains to prove the following identity for
any $s\in \CS_\vphi$:
\begin{equation}\label{eq:endoscopic:1}
\sum_{\pi\in \Pi^\Vg_\rel(\vphi)}\chi_{\pi}(s)
m(\pi) = \chi_\vphi(s).
\end{equation}

We introduce the following notation.

\begin{defin}\label{defin:virtualrep}
Let $V$ be a non-degenerate quadratic space over $\BR$, and let $\vphi_V$ be a
local $L$-parameter of $\SO(V)$. For $s\in \CS_{\vphi_V}$, define the virtual
representation
\[
\Sig^s_{V,\vphi_V} =
\sum_{\pi\in \Pi^{\SO(V)}(\vphi_V)}
\chi_\pi(s) \pi.
\]
When $\vphi_V$ is clear, we abbreviate it as $\Sig^s_{V}$.
\end{defin}

\begin{defin}\label{defin:stablemultiplicity}
Let $(W,V)$ be an admissible pair over $\BR$ with Gross--Prasad triple
$(G,H,\xi)$. Let $\vphi = \vphi_W\times \vphi_V$ be a local $L$-parameter of
$G=\SO(W)\times \SO(V)$. Define
\[
m^S_{W,V,\vphi} :=
\sum_{\alp\in \RH^1(\BR,\SO(W))}m(
\Sig^{-1_{\vphi_W}}_{W_\alp},
\Sig^{1_{\vphi_V}}_{V_\alp}
).
\]
Here
\[
m(\Sig^{-1_{\vphi_W}}_{W},\Sig^{1_{\vphi_V}}_{V}) :=
\sum_{
\pi\in \Pi^{G}(\vphi)}
\chi_\pi(-1_{\vphi_W},1_{\vphi_V})
 m(\pi)
\]
and $\pm 1_{\vphi_*}$ is the $\pm$ identity element on the component group.
When $\vphi_*$ is clear, we abbreviate $1_{\vphi_*}$ as $1$.
\end{defin}

For $s_V\in \CS_{\vphi_V}\bs \CS_{Z_V}$, the eigenspaces
\[
\RM_V^{s_V = \pm 1}
\quad \text{and}\quad 
\RM_W^{s_W = \pm 1}
\]
are both of even dimension and strictly smaller than $\dim V$ and $\dim W$,
respectively. Following \cite[\S 1.7~\&~\S 3.3]{MR3155345}, we have the
following lemma.

\begin{lem}
There exist elliptic endoscopic groups $\SO(V_+)\times \SO(V_-)$ of
$\SO(V)$ and $\SO(W_+)\times \SO(W_-)$ of $\SO(W)$ such that the standard
representation spaces of ${}^L\big(\SO(V_\pm)\big)$ and
${}^L\big(\SO(W_\pm)\big)$ are $\RM_V^{s_V=\pm1}$ and
$\RM_W^{s_W=\pm1}$, respectively, and such that, up to permutation, every
pair $(W_\epsilon,V_\delta)$, $\epsilon,\delta=\pm$, is admissible over
$\BR$.
\end{lem}

\begin{proof}
Using the notation in \eqref{conjec:pureinner:1}, set
$
\Del(V)=\mathrm{PI}(V)-\mathrm{NI}(V).
$
We may assume that $\dim V$ is odd, and hence that $\dim W$ is even. We may
also replace $(W,V)$ by the relevant quasi-split pure inner form. Then
\[
\Del(V)=\pm1,\qquad \Del(W)\in\{0,\pm2\},
\]
and, since $(W,V)$ is admissible up to permutation, the admissibility condition
is equivalent to
\[
\eta:=\Del(V)-\Del(W)\in\{\pm1\}.
\]
Hence
$
\Del(W)\in\{0,-2\eta\}.
$
By \cite[\S 1.7]{MR3155345}, the elliptic endoscopic groups of $\SO(W)$ are
parametrized by pairs of even-dimensional quasi-split quadratic spaces
$W_\pm$ satisfying
\begin{align*}
\dim W_+ +\dim W_-&=\dim W
\\
\disc(W_+)\disc(W_-)=\disc(W)
&\Longleftrightarrow
\Del(W_+)+\Del(W_-)\equiv \Del(W)\pmod{4}.
\end{align*}
We choose the representatives $W_\pm$ so that
\[
\dim W_\pm=\dim\RM_W^{s_W=\pm1},\qquad
\Del(W_\pm)\in\{0,-2\eta\}.
\]
Similarly, the elliptic endoscopic groups of $\SO(V)$ are
$\SO(V_+)\times\SO(V_-)$ with
\[
\dim V_+ +\dim V_-=1+\dim V.
\]
We choose the split representatives $V_\pm$ so that
\[
\dim V_\pm=1+\dim\RM_V^{s_V=\pm1},\qquad
\Del(V_\pm)=-\eta.
\]
Then, for every $\epsilon,\delta=\pm$,
\[
\Del(V_\delta)-\Del(W_\epsilon)=\pm\eta=\pm1.
\]
Thus, up to permutation, each pair $(W_\epsilon,V_\delta)$ is admissible over
$\BR$.
\end{proof}

We complete the proof of Theorem \ref{thm:main} with the following proposition.
\begin{pro}\label{pro:endoscopicid}
Fix an admissible pair $(W,V)$ over $\BR$ with Gross-Prasad triple $(G,H,\xi)$. Let $\vphi = \vphi_W\times \vphi_V$ be a tempered local $L$-parameter of $G$, and let $s=(s_V,s_W)\in \CS_\vphi = \CS_{\vphi_V}\times \CS_{\vphi_W}$. For $e_0\in \{\pm 1\}$, the following identity holds:
\begin{align}\label{eq:endoscopicid}
\sum_{
\substack{
\alp\in \RH^1(\BR,\SO(W))
\\
e(G_\alp) = e_0
}
}
m(\Sig^{s_W}_{W_\alp},\Sig^{s_V}_{V_\alp})
=
\frac{1}{2}
\big(
e_0 m^S_{W_+,V_+} m^S_{W_-,V_-}
+
m^S_{W_+,V_-} m^S_{W_-,V_+}
\big).
\end{align}
Here
\[
m(\Sig^{s_W}_{W_\alp},\Sig^{s_V}_{V_\alp}) =
\sum_{
\pi\in \Pi^{G_\alp}(\vphi)}
\chi_\pi(s)
 m(\pi)
,\qquad
m^S_{W_\pm,V_\pm} = m^S_{W_\pm,V_\pm,\vphi_{W_\pm}\oplus \vphi_{V_\pm}}.
\]
\end{pro}

We postpone the proof of Proposition \ref{pro:endoscopicid} to the final subsubsection. We now use Proposition \ref{pro:endoscopicid} to prove \eqref{eq:endoscopic:1}.

Adding \eqref{eq:endoscopicid} for $e_0=\pm 1$,
\[
\sum_{\alp\in \RH^1(\BR,\SO(W))}
m(\Sig^{s_W}_{W_\alp},\Sig^{s_V}_{V_\alp}) =
m^S_{W_+,V_-} m^S_{W_-,V_+}.
\]
The left-hand side is
\[
\sum_{\alp\in \RH^1(\BR,\SO(W))}
\sum_{\pi\in \Pi^{G_\alp}(\vphi)}
\chi_\pi(s) m(\pi)
=
\sum_{\pi\in \Pi^\Vg_\rel(\vphi)}
\chi_\pi(s) m(\pi).
\]
Therefore, to establish \eqref{eq:endoscopic:1}, it suffices to show that
\[
m^S_{W_+,V_-} m^S_{W_-,V_+} =
\chi_\vphi(s).
\]
By Definition \ref{defin:stablemultiplicity},
\[
m^S_{W_+,V_-}
=
\sum_{\pi\in \Pi^\Vg_{\rel}(\vphi_{W_+}\times \vphi_{V_-})}
\chi_\pi(-1,1) m(\pi).
\]
By the induction hypothesis applied to the smaller Gross--Prasad pairs, we have
\[
m^S_{W_+,V_-} =
\chi_{\vphi_{W_+}\times \vphi_{V_-}}(-1,1),
\quad
m^S_{W_-,V_+} =
\chi_{\vphi_{W_-}\times \vphi_{V_+}}(-1,1).
\]
Hence it remains to prove
\[
\chi_{\vphi_{W_+}\times \vphi_{V_-}}(-1,1)
\chi_{\vphi_{W_-}\times \vphi_{V_+}}(-1,1)
=
\chi_\vphi(s).
\]
By \cite[(10.4)]{MR1186476} and \cite[Prop.~10.5]{MR1186476}, it suffices to show
\[
\chi_{\vphi_{W_+}\times \vphi_{V_-}}(1,-1)
\chi_{\vphi_{W_-}\times \vphi_{V_+}}(1,-1)
=
\chi_\vphi(s).
\]

By \eqref{eq:chivphicharacter:1},
\begin{align*}
\chi_{\vphi_{W_+}\times \vphi_{V_-}}(1,-1)
= &
\det
({\RM^{s_V=-1}_V})^{\frac{\dim \RM_{W}^{s_W = 1}}{2}}(-1)
\det
({\RM_{W}^{s_W=1}})^{\frac{\dim \RM_V^{s_V=-1}}{2}}(-1)
\\
&
\veps
\big(
\frac{1}{2},
\RM^{s_V=-1}_V\otimes \RM_{W}^{s_W = 1},\psi
\big),
\end{align*}
and
\begin{align*}
\chi_{\vphi_{W_-}\times \vphi_{V_+}}(1,-1)
=
&
\det
({\RM^{s_W=-1}_W})^{\frac{\dim \RM_{V}^{s_V = 1}}{2}}(-1)
\det
({\RM_{V}^{s_V=1}})^{\frac{\dim \RM_W^{s_W=-1}}{2}}(-1)
\\
&
\veps
\big(
\frac{1}{2},
\RM^{s_W=-1}_W\otimes \RM_{V}^{s_V = 1},\psi
\big).
\end{align*}
Comparing this with the definition of $\chi_\vphi$, it remains only to show
\begin{align*}
&\veps
\big(
\frac{1}{2},
\RM^{s_V=-1}_V
\otimes
\RM^{s_W=-1}_W,
\psi
\big)^2
\det({\RM^{s_V=-1}_V})^{\frac{\dim \RM_W^{s_W=-1}}{2}}(-1)
\det({\RM_W^{s_W=-1}})^{\frac{\dim \RM_V^{s_V=-1}}{2}}(-1)
\\
&
\det({\RM_W^{s_W=-1}})^{
\frac{\dim \RM^{s_V=-1}_V}{2}
}(-1)
\det({\RM_V^{s_V=-1}})^{\frac{\dim \RM_W^{s_W=-1}}{2}}(-1)=1.
\end{align*}
This follows because $\veps(\cdots)\in \{\pm 1\}$. This completes the proof of
Theorem \ref{thm:main} in the endoscopic case.

\subsubsection{Proof of Proposition \ref{pro:endoscopicid}}

Let
\[
\Theta_{\Sig^{s_V}_V}
=
\sum_{\pi\in \Pi^{\SO(V)}(\vphi_V)}\chi_\pi(s_V)\Theta_\pi
\]
be the virtual distribution character attached to $\Sig^{s_V}_V$. We use the
following consequences of Shelstad's work.
\begin{num}
\item\label{num:shelstadworkendoscopy}
\begin{enumerate}
    \item
By \cite[Lem.~5.2]{shelsteadinner}, the distribution characters $\Theta_{\Sig^{1}_{V_{\pm}}}$ and $\Theta_{\Sig^{1}_{W_\pm}}$ are stable.

\item By \cite{MR532374,shelsteadinner,MR627641,MR2454336,MR2581952,MR2448289},
\[
e(\SO(V_\alp)) \Theta_{\Sig^{s}_{V_\alp}}
\quad \text{respectively}\quad
e(\SO(W_\alp))\Theta_{\Sig^s_{W_\alp}}
\]
is the endoscopic transfer (see, for instance, \cite[\S 1.6]{MR3155345}) of
$\Theta_{\Sig^1_{V_+}}\times \Theta_{\Sig^1_{V_-}}$, respectively of
$\Theta_{\Sig^1_{W_+}}\times \Theta_{\Sig^1_{W_-}}$, for any
$\alp\in H^1(\BR,\SO(V))$, respectively
$\alp\in H^1(\BR,\SO(W))$.
\end{enumerate}
\end{num}

The following proposition is the analogue of \cite[Prop.~3.3]{MR3155345}.

\begin{pro}\label{pro:analoguewaldspurger3.3}
For $e_0 = \pm 1$,
\begin{align*}
\sum_{
\substack{
\alp\in H^1(\BR,\SO(W))
\\
e(G_\alp) = e_0
}
}
m_\geom(\Sig^{s_W}_{W_\alp},
\Sig^{s_V}_{V_\alp}
) = &
\frac{1}{2}
\big(
e_0 m^S_\geom(\Sig^1_{W_+},\Sig^1_{V_+}) m^S_\geom(\Sig^1_{W_-},\Sig^1_{V_-})
\\
&+
m^S_\geom(\Sig^1_{W_+},\Sig^1_{V_-})
m^S_\geom(\Sig^1_{W_-},\Sig^1_{V_+})
\big).
\end{align*}
Here
\[
m^S_\geom(\Sig^{s_W}_W,\Sig^{s_V}_V) =
m^S_\geom(\Theta_{\Sig^{s_W}_W}\times \Theta_{\Sig^{s_V}_V}),
\]
where the latter is defined in \eqref{eq:stablegeomultiplicity}.
\end{pro}
\begin{proof}
Using \eqref{num:shelstadworkendoscopy}, Lemmas \ref{lem:fiberoverkappapp}
and \ref{lem:fiberoverkappappunionoverpure}, and Corollary
\ref{cor:germfunaslimitofquasichar}, the proof of
\cite[Prop.~3.3]{MR3155345} applies verbatim. For convenience, assume that
$\dim W$ is odd; the other case is similar. By \eqref{eq:geomultiplicity} and
Lemma \ref{lem:fiberoverkappappunionoverpure},
\[
\sum_{
\substack{
\alp\in H^1(\BR,\SO(W))
\\
e(G_\alp) = e_0
}
}
m_\geom(\Sig^{s_W}_{W_\alp},
\Sig^{s_V}_{V_\alp}
)
=
\int_{\Xi(d_V,d_W)}
f_{1,e_0}(\kappa^{\p\p})\ud \kappa^{\p\p}
\]
where
\[
f_{1,e_0}(\kappa^{\p\p}) =
\sum_{
    c^{\p\p}\in C(\kappa^{\p\p})^{e_0\cdot
    \eps_{W,\kappa^{\p\p}}}
}
c_{\Sig^{s_W}_W,\Sig^{s_V}_V}(x_{\kappa^{\p\p},c^{\p\p}})
D^G(x_{\kappa^{\p\p},c^{\p\p}})\Del(x_{\kappa^{\p\p},c^{\p\p}})^{-1/2}
\]
and
$
c_{\Sig^{s_W}_W,\Sig^{s_V}_V}=
c_{\Theta_{\Sig^{s_W}_W}\times\Theta_{\Sig^{s_V}_V}}.
$
By Remark \ref{rmk:afterpro:unionpureinnerconjclassof},
$\eps_{W,\kappa^{\p\p}}$ depends only on $\kappa^{\p\p}$ and $e_0$. Similarly,
let $G^{\pm\pm} = \SO(W_{\pm})\times \SO(V_\pm)$. By
\eqref{eq:stablegeomultiplicity} and a direct computation,
\begin{align*}
&\frac{1}{2}
\big(
e_0 m^S_\geom(\Sig^1_{W_+},\Sig^1_{V_+})
m^S_\geom(\Sig^1_{W_-},\Sig^1_{V_-})
+
m^S_\geom(\Sig^1_{W_+},\Sig^1_{V_-})
m^S_\geom(\Sig^1_{W_-},\Sig^1_{V_+})
\big)
\\
=&
\int_{\Xi(d_V,d_W)}
f_{2,e_0}(\kappa^{\p\p})\ud \kappa^{\p\p}
\end{align*}
where $f_{2,e_0}(\kappa^{\p\p})$ is equal to
\begin{align*}
2^{|I^*_{\kappa^{\p\p}}|-1}
e_0
\sum_{
(I^{\p\p}_1,I^{\p\p}_2)\in \CI^+(\kappa^{\p\p})
}
&\big\{c_{\Sig_{W_+}^1,\Sig^1_{V_+}}
(\kappa^{\p\p}(I^{\p\p}_1))
D^{G_{++}}(\kappa^{\p\p}(I_1^{\p\p}))^{1/2}
\Del(\kappa^{\p\p}(I_1^{\p\p}))^{1/2}
\\
&c_{\Sig_{W_-}^1,\Sig^1_{V_-}}
(\kappa^{\p\p}(I^{\p\p}_2))
D^{G_{--}}(\kappa^{\p\p}(I_2^{\p\p}))^{1/2}
\Del(\kappa^{\p\p}(I_2^{\p\p}))^{1/2}
\big\}
\end{align*}
\begin{align*}
+2^{|I^*_{\kappa^{\p\p}}|-1}
\sum_{
(I^{\p\p}_1,I^{\p\p}_2)\in \CI^-(\kappa^{\p\p})
}
&\big\{c_{\Sig_{W_+}^1,\Sig^1_{V_-}}
(\kappa^{\p\p}(I^{\p\p}_1))
D^{G_{+-}}(\kappa^{\p\p}(I_1^{\p\p}))^{1/2}
\Del(\kappa^{\p\p}(I_1^{\p\p}))^{1/2}
\\
&c_{\Sig_{W_-}^1,\Sig^1_{V_+}}
(\kappa^{\p\p}(I^{\p\p}_2))
D^{G_{-+}}(\kappa^{\p\p}(I_2^{\p\p}))^{1/2}
\Del(\kappa^{\p\p}(I_2^{\p\p}))^{1/2}
\big\}
\end{align*}
Here, for $\kappa^{\p\p}\in \Xi(d_V,d_W)$, the pair $(I^{\p\p}_1,I^{\p\p}_2)$ lies in $\CI^+(\kappa^{\p\p})$ (resp. $\CI^-(\kappa^{\p\p})$) if and only if
\begin{itemize}
    \item
$I_{\kappa^{\p\p}} = I^{\p\p}_1\sqcup I^{\p\p}_2$;

    \item
$\kappa^{\p\p}(I^{\p\p}_1) =(I^{\p\p}_1,(F_{\pm i})_{i\in I^{\p\p}_1}, (F_i)_{i\in I^{\p\p}_1}, (u_i)_{i\in I^{\p\p}_1})\in \Xi(d_{V_+},d_{W_+})$ (resp. $\Xi(d_{V_+},d_{W_-})$) and $\kappa^{\p\p}(I_2^{\p\p})\in \Xi(d_{V_-},d_{W_-})$ (resp. $\Xi(d_{V_-},d_{W_+})$).
\end{itemize}
Thus it suffices to show that $f_{1,e_0}=f_{2,e_0}$, which follows verbatim
from \cite[Prop.~3.3]{MR3155345}. We note that Corollary
\ref{cor:germfunaslimitofquasichar} is the exact archimedean analogue of
\cite[3.1~(3)]{MR3155345}, which is used in the proof of that proposition.
\end{proof}

We now deduce Proposition \ref{pro:endoscopicid}.

We first record the special case that identifies the stable geometric
multiplicity with the stable multiplicity. Taking $s_V=1$ and $s_W=-1$ in
Proposition \ref{pro:analoguewaldspurger3.3}, we get
\[
V_+ = V_\qs, \quad V_- = 0, \quad W_+ = 0,\quad W_-=W_\qs,
\]
where $(W_\qs,V_\qs)$ is introduced in Remark \ref{rmk:uniqueqsggp}.

Thus Proposition \ref{pro:analoguewaldspurger3.3} becomes
\[
\sum_{
\substack{
\alp\in H^1(\BR,\SO(W))
\\
e(G_\alp) = e_0}
}
m_\geom(\Sig^{-1}_{W_\alp},\Sig^1_{V_\alp}) =
\frac{1}{2}
\big(
e_0
m^S_\geom(0,\Sig^1_{V_+})
m^S_\geom(\Sig^1_{W_-},0)
+m^S_\geom(\Sig^1_{W_-},\Sig^1_{V_+})
\big).
\]
Summing over $e_0=\pm 1$ and applying Theorem \ref{thm:luothesis}, we obtain
the following corollary.

\begin{cor}\label{cor:analoguewaldspurger3.3}
The following identity holds:
\[
m^S_\geom(\Sig^1_{W_\qs},\Sig^1_{V_\qs}) =
\sum_{
\alp\in H^1(\BR,\SO(W))
}
m_\geom(\Sig^{-1}_{W_\alp},\Sig^1_{V_\alp}) = m^S_{W_\qs,V_\qs}.
\]
\end{cor}

Finally, Theorem \ref{thm:luothesis} gives
\[
m(\Sig^{s_W}_{W_\alp},\Sig^{s_V}_{V_\alp})
=
m_\geom(\Sig^{s_W}_{W_\alp},\Sig^{s_V}_{V_\alp}),
\]
and the corollary above identifies the stable geometric terms with the stable
multiplicities appearing in Proposition \ref{pro:endoscopicid}. Hence
Proposition \ref{pro:endoscopicid} follows from Proposition
\ref{pro:analoguewaldspurger3.3}.

\bibliographystyle{amsalpha}
\bibliography{GGP_RealSO_Epsilon}

\end{document}